\documentclass[a4paper,notitlepage,twoside,leqno,12pt]{amsart}
\usepackage{graphicx}
\usepackage{float}
\usepackage{bbm,pifont,latexsym}
\usepackage{anysize}

\marginsize{3.8cm}{3.8cm}{3.8cm}{3.6cm}

\usepackage{dcolumn,indentfirst}
\usepackage[colorlinks=false,linkcolor=black,citecolor=black,urlcolor=black,bookmarksopen=true,linktocpage=true,pdfstartview=FitH]{hyperref}
\usepackage{amsmath,amssymb,amscd,amsthm,amsfonts,mathrsfs}
\usepackage{color,graphicx,xcolor,graphics,subfigure,extarrows,caption2}
\usepackage{titletoc}
\newtheorem{thm}{Theorem}[section]

\newtheorem*{conj}{Conjecture}
\newtheorem*{main}{Main Theorem}
\newtheorem{cor}{Corollary}

\newtheorem{lema}[thm]{Lemma}

\theoremstyle{definition}
\newtheorem{defi}{Definition}
\newtheorem*{rmk}{Remark}

\usepackage{enumerate,enumitem}
\makeatletter\@addtoreset{equation}{section}\makeatother

\begin{document}

\author{Panjing Wu}
\address{Department of Mathematics, Nanjing University, Nanjing, 210093, P. R. China}
\email{pgwuxidian@163.com}

\title[]{On the  Global Curve Attractor  for polynomial gluing}

\begin{abstract}
Pilgrim's Finite Global Attractor Conjecture has been verified for  polynomials \cite{Belk2022}, but remains open for general rational maps. In this paper, we prove the conjecture for a family of rational maps obtained by  gluing two PCF polynomials along the boundaries of their finite superattracting basins.   Adapting the idea of \cite{WangZhang}, we show that a suitably defined intersection number with a finite family of separating arcs eventually decays under pullback, yielding a finite collection of homotopy classes that attracts all non-peripheral curves under iteration.

\end{abstract}

\subjclass[2010]{Primary: 37F45; Secondary: 37F10, 37F30}
\keywords{Finite global attractor, Hubbard tree, gluing, post-critically finite}
\date{\today}
\maketitle

\section{Introduction}

\subsection{The statement of the main result}
The study of post-critically finite (PCF) rational maps lies at the heart of complex dynamics. Among the many deep open problems concerning these maps, Pilgrim's \emph{Finite Global Attractor Conjecture} has stands out as a central challenge. It connects the dynamics of curves to Thurston's theory of combinatorial equivalence and to the geometry of Teichm\"uller space \cite{KPS}. The conjecture can be stated as follows \cite{P2}.

\begin{conj}
Let $f$ be a post-critically finite rational map and $P_f$ its post-critical set. Suppose $f$ is not a  Latt\`es map. Then there is a finite set of non-peripheral curves $\gamma_i, i =1, \cdots, n$, such that
 for any non-peripheral curve $\gamma$ in $\widehat{\Bbb C} \setminus P_f$, there is an $N \ge 1$ such that for any $k \ge N$,
 any non-peripheral component of $f^{-k}(\gamma)$ must be homotopic to one of these $\gamma_i$.
\end{conj}

Over the past decade, substantial progress has been made towards the conjecture. It has been verified for non-Latt\`es quadratic maps with four post-critical points \cite{KL19},  and for all critically fixed rational maps \cite{Hlu19}. Pilgrim proved that the conjecture holds when the associated virtual endomorphism on the mapping class group is contracting,   and  for   quadratic polynomials with a periodic critical point, and also for three specific quadratic polynomials \cite{P1}. Recently,  Belk,  Lanier,   Margalit  and  Winarski   proved the conjecture for all post-critically  finite polynomials. Their proof introduced a combinatorial encoding of curves via subforests of a tree, and showed that the  tree-lifting operator does not increase the metric on the complex of trees. Therefore, after sufficiently many iterations, any tree is sent into a small neighborhood of a Hubbard vertex containing only finitely many trees. Hence the pullback of any non-peripheral curve is eventually homotopic to the boundary of a Jordan neighborhood of a subforest of one of these finitely many trees. This verified  the conjecture  for polynomial case. Furthermore, Bonk,  Hlushchanka,   Iseli and  Lodge   made substantial contributions to the case of four post-critical points \cite{BHI}\cite{BHL}, which has recently been fully resolved by  Bartholdi,  Dudko and  Pilgrim \cite{BDP}. More recently, Wang and Zhang \cite{WangZhang} gave an alternative proof for the polynomial case, and their approach also yields an exponential rate of convergence to the attractor. Despite this progress, the conjecture remains open for general PCF rational maps.

The main purpose of this paper is to prove the conjecture for a class of rational maps obtained  by gluing two polynomials along the Jordan boundaries  of two finite super-attracting  immediate basins.  More precisely, suppose $f$ and $g$ are two PCF polynomials of degree $d_1$ and $d_2$ respectively, both of which have a  finite immediate super-attracting basin of degree $d_0$  such that the $\emph{independence condition}$  holds, that is, the remaining critical orbits do not intersect the marked immediate basins. According to \cite{Zhang2024}, $f$ and $g$ can be glued into a rational map  of degree $d_1+d_2-d_0$, denoted by  $\mathcal{G}(f,g)$.
 In this paper, we prove

\begin{main}
Pilgrim's Finite Global Attractor Conjecture holds for \(\mathcal{G}(f,g)\).
\end{main}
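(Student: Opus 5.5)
The plan is to adapt the intersection-number argument of \cite{WangZhang} to the glued map $F=\mathcal{G}(f,g)$. By \cite{Zhang2024}, the sphere decomposes as $\widehat{\Bbb C}=K_f'\cup K_g'$. These two pieces are closed sets with common boundary a Jordan curve $\Gamma$, which is the image of the boundaries of the glued immediate basins. Moreover, $F$ restricted to a neighbourhood of each piece is conjugate to $f$ (respectively $g$) on its filled Julia set with the marked basin removed. On $\Gamma$ the map acts as $z\mapsto z^{d_0}$ in suitable coordinates. The independence condition gives $P_F=P_f'\sqcup P_g'$, with no postcritical point on or near $\Gamma$. Inside each piece I take the Hubbard tree of the corresponding polynomial, and I join it to $\Gamma$ along the (pre)periodic internal rays of the glued basin. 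This produces a finite $F$-forward-invariant graph $\mathcal{T}$ containing $P_F$. From $\mathcal{T}$ I extract a finite family $\mathcal{A}=\{\alpha_1,\dots,\alpha_m\}$ of arcs with endpoints in $P_F$. I choose them so that their union cuts $\widehat{\Bbb C}\setminus P_F$ into topological discs with at most one puncture each, so that $F^{-1}(\bigcup\alpha_j)\supset\bigcup\alpha_j$ up to homotopy rel $P_F$, and so that each lifted arc decomposes into arcs of $\mathcal{A}$ together with arcs contractible into $\bigcup\mathcal A$.

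For a non-peripheral curve $\gamma$, define $I(\gamma)=\sum_j i(\gamma,\alpha_j)$, the minimal geometric intersection number. Since $\mathcal{A}$ fills, only finitely many homotopy classes have $I(\gamma)\le C$ for any given $C$. The goal is the decay estimate: there exist $N$ and $C_0$ such that every non-peripheral component $\delta$ of $F^{-N}(\gamma)$ satisfies
\begin{equation*}
I(\delta)\le \lambda\, I(\gamma)+C_0,\qquad \lambda<1 .
\end{equation*}
Iterating this estimate, every curve eventually falls into the finite set $\{I\le C_0/(1-\lambda)+1\}$, and this set is the desired attractor. The estimate is obtained from the count
\begin{equation*}
\sum_{\delta} \deg(F^N|_\delta)\, I(\delta)\le I(\gamma),
\end{equation*}
which follows from pullback invariance of $\bigcup\alpha_j$. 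One then shows that either $\deg(F^N|_\delta)\ge 2$, or $\delta$ loses a definite proportion of its intersections, because some components are peripheral or contractible.

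The main work is a case analysis according to how $\gamma$ meets $\Gamma$. First suppose $\gamma$ is homotopic into one piece, say $K_f'$ minus a disc. Then its lifts stay in that piece, and the dynamics there is that of the polynomial $f$. The polynomial result (\cite{Belk2022}, or the decay of \cite{WangZhang} applied to $f$ with the basin closure collapsed to a point) gives convergence to finitely many classes. To make this legitimate, the collapsed basin must be treated as an extra marked point, and one checks that peripheral curves around it correspond to curves homotopic to $\Gamma$. If instead $\gamma$ essentially crosses $\Gamma$, I write it as alternating arcs in the two pieces, and I count $i(\gamma,\Gamma)$ together with $I$. Since $\Gamma$ is pulled back to itself with degree $d_0\ge2$, an arc of $\gamma$ that crosses the annulus around $\Gamma$ lifts to arcs that wind less around $\Gamma$. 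Consequently the twisting of $\gamma$ around $\Gamma$, which is the only possible source of unbounded intersection, is divided by $d_0$ at each step. The arcs inside each piece are then controlled by the polynomial estimate applied to arcs rel endpoints on $\Gamma$.

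I expect the main obstacle to be this crossing case. The difficulty is to prove that intersections accumulated through twisting around $\Gamma$, or around curves nested inside the basins, genuinely decay, rather than being preserved by a degree-one lift. The first thing I would try is to choose the arcs of $\mathcal{A}$ that cross $\Gamma$ to be internal rays landing at fixed points of $\Gamma$. With this choice, a degree-one lift along $\Gamma$ is impossible: any curve crossing $\Gamma$ $2k$ times meets the $d_0$-fold cover of $\Gamma$, and a degree-one component therefore picks up at most $\lceil 2k/d_0\rceil$ crossings. This yields a contraction factor of roughly $1/d_0$ on the $\Gamma$-part of $I$, while the remaining part is handled by the polynomial decay.
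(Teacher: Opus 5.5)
Your argument has a genuine gap, at the two steps that carry the weight. First, the count $\sum_{\delta}\deg(F^N|_\delta)\,I(\delta)\le I(\gamma)$ does not follow from $F^{-1}(\bigcup_j\alpha_j)\supset\bigcup_j\alpha_j$. Forward invariance gives a bound of the opposite kind. Each point of $\delta\cap\bigcup_j\alpha_j$ maps to a point of $\gamma\cap\bigcup_j\alpha_j$, and each of those has $\deg(F^N|_\delta)$ preimages on $\delta$. So you only get $I(\delta)\le\deg(F^N|_\delta)\,I(\gamma)$, which allows growth, since tree edges expand. Your own setting contains a counterexample. If $\Gamma$ is non-peripheral and disjoint from $P_F$, then $\Gamma$ is a component of $F^{-N}(\Gamma)$ of degree $d_0^N$. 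Also $I(\Gamma)\ge 1$ because $\mathcal A$ fills. So the left-hand side is at least $d_0^N I(\Gamma)>I(\Gamma)$, and the dichotomy ``degree $\ge 2$ or loss of a definite proportion'' yields no decay.

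Second, the crossing case, which you rightly flag as the main obstacle, is not actually handled. The bound $\lceil 2k/d_0\rceil$ for degree-one lifts is unfounded. A degree-one component contains exactly one preimage of each point of $\gamma\cap\Gamma$, and nothing forces that preimage off $\Gamma$; injectivity gives only $i(\delta,\Gamma)\le i(\gamma,\Gamma)$. For example, let $D$ be a Jordan domain straddling $\Gamma$, with postcritical points on both sides, on which $F$ is injective and which meets $F^{-1}(\Gamma)$ only along $\Gamma$. Then $\partial D$ is a degree-one lift of $\partial F(D)$ with exactly the same crossings. Nor is twisting about $\Gamma$ the only source of unbounded complexity. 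Dehn twists about a curve inside one piece make $I$ unbounded while $i(\cdot,\Gamma)$ stays fixed. Twists about a curve crossing $\Gamma$ make $i(\cdot,\Gamma)$ itself unbounded with no twisting about $\Gamma$ at all. Controlling the first kind would need a polynomial estimate for arcs rel endpoints on $\Gamma$. The polynomial theorems you cite concern closed curves and do not provide this.

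Two further problems. The independence condition only keeps critical orbits out of the \emph{open} basins, so $P_F$ may meet $\Gamma=\mathbb T$; the paper separates such points with type III arcs. In that case $\Gamma$ is not a curve in $\widehat{\mathbb C}\setminus P_F$, and your split into ``homotopic into one piece'' versus ``crossing $\Gamma$'' is not well posed.

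The missing idea is a complexity that is monotone under pullback \emph{without} degree weights. The paper gets it from separating arcs built from rays rather than tree edges: pairs of $f$-rays, pairs of $\tilde g$-rays (with internal rays in the attracting case), and type III arcs crossing $\mathbb T$ at periodic non-postcritical points. Each is mapped homeomorphically by $F$, and the periodic images form a family $\mathcal F_0$ permuted by $F$. Then the points of $F^{-1}(\gamma)\cap L$ correspond bijectively to those of $\gamma\cap F(L)$, so $N_{\mathcal F_0}$ never increases. Strict decrease above the threshold $d^{rp}$ comes from adjoint arcs $L'$, with $F^{rp}\colon L'\to L$ an orientation-preserving homeomorphism and a postcritical-free strip between $L$ and $L'$. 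If the complexity does not drop, every crossing of $L$ by $C'$ is followed inside the strip by a crossing of $L'$, and each such strip arc maps onto all of $C$. Hence $\deg(F^{rp}|_{C'})\ge N_{\mathcal F_0}(C)>d^{rp}$, which is impossible. This needs only a decrease by one every $rp$ pullbacks, not a contraction factor $\lambda<1$. It also treats curves crossing $\mathbb T$ uniformly, with no case analysis on $\Gamma$.
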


The main theorem is established under the independence condition imposed in \cite{Zhang2024}, which requires that the remaining critical orbits avoid the marked immediate basins. We remark, however, that for polynomial data this hypothesis can be dropped whenever the resulting topological gluing is post-critically finite; see the remark following Theorem~2.2. As a consequence, we obtain the following  characterization: a PCF rational map $G$ is realized as a gluing of two PCF polynomials of degrees $d_1$ and $d_2$ if and only if it admits an invariant Jordan curve $\Gamma$ such that $G|_{\Gamma}$ is conjugate to a circle covering of degree $d_0 \ge 2$, and it has two super-attracting fixed points $p$ and $q$ of local degrees $d_1$ and $d_2$, respectively, with $p$ lying in the exterior of $\Gamma$ and $q$ in its interior, and with all other preimages of $p$ contained in the interior of $\Gamma$ and all other preimages of $q$ contained in the exterior.

Our work  establishes the base case for  gluing  two polynomials. According to  \cite{Zhang2024}, one can   glue \(\mathcal{G}(f,g)\)   with another  PCF polynomial into a rational map of higher degree, provided that the required conditions are satisfied.    This suggests an inductive strategy for expanding the result to a wider class of rational maps via successive gluing with polynomials.

\begin{figure}
    \centering
    \includegraphics[width=0.63\linewidth]{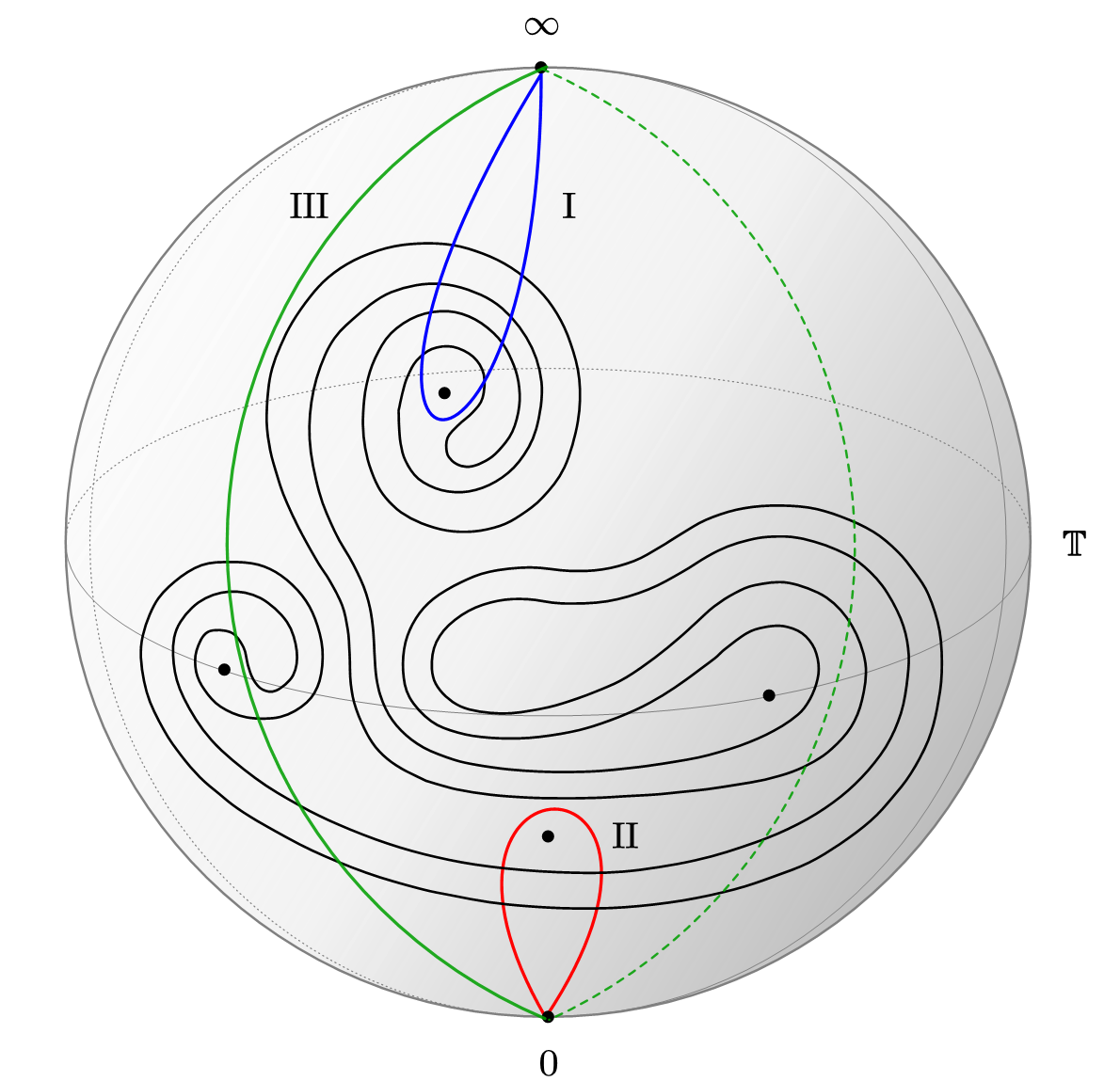}
 \caption{An admissible family of separating arcs for $F$.  Type~I arcs (blue) lie on the $f$-side, type~II arcs (red) on the $g$-side, and type~III arcs (green) cross the gluing circle $\mathbb{T}$. The family is chosen so that each complementary region contains at most one postcritical point (shown in black). These arcs are used to define a complexity function that controls the pullback dynamics.}
    \label{F5}
\end{figure}
\subsection{The idea of  the proof.}
We adapt the idea from \cite{WangZhang} for polynomials to the situation here. Since the global curve attractor is combinatorially invariant, it suffices to consider the topological gluing of $f$ and $g$, say $F$, for which the gluing curve is identified with the unit circle,  see Section~\ref{sec:gluing} for details.  First we construct an invariant graph \(T\) of $F$
by gluing the Hubbard trees of \(f\) and \(g\) along the unit circle.   We then   build a finite   family \(\mathcal{F}\) consisting of three types of separating arcs, each of which is a Jordan curve in the sphere. Roughly speaking,
 a type I separating arc is the union of two $f$-rays starting from infinity and   landing  at  the same point,
   a type II  separating arc is the union of two $g$-rays starting from the origin and  landing at the same point, a type III separating arc is the union of two $f$-rays and two $g$-rays which meet together on the unit circle.  We choose the rays such  that the family
 $\mathcal{F}$ is admissible, meaning that its complementary regions each  are simply connected and  contain at most one postcritical point.

For any non-peripheral curve \(C\subset \widehat{\mathbb{C}}\setminus P_F\), define its complexity \(N_{\mathcal{F}}(C)\) as the minimum, over all curves homotopic to \(C\), of the total intersection number with the arcs in \(\mathcal{F}\). This provides a numerical measure of complexity for such curves.  Our  main task is to prove that  there exists a constant \(M > 0\) depending only on $F$ and $\mathcal{F}$, such that for every non-peripheral curve \(C\), every component \(C'\) of \(F^{-k}(C)\) satisfies \(N_{\mathcal{F}}(C') \le M\) for all sufficiently large \(k\).
 The main theorem then follows  since the curves in
\[
\mathcal{C}_M=\bigl\{ \gamma \subset \widehat{\mathbb{C}} \setminus P_F \;\big|\; \gamma \text{ is non-peripheral and } N_{\mathcal{F}}(\gamma) \le M \bigr\}
\]
belong to  finitely many homotopy classes.

Conceptually, the global curve attractor problem is dual to the invariant graph problem in the sense that a solution to one yields a solution to the other. This idea is effectively implemented in \cite{Belk2022}, where a simple closed curve is encoded as a thin Jordan neighborhood of a subforest of a tree. It turns out that the tree lifting operator is a complex map that does not increase a certain metric on the complex of trees. Consequently, iterating this operator on any tree eventually leads to a periodic cycle of trees. By showing that any periodic tree must lie in a small neighborhood of the Hubbard tree, the global curve attractor problem for polynomial maps follows.

For general rational maps, however, a periodic graph need not lie in a small neighborhood of an invariant graph, since there exist infinitely many periodic Jordan curves passing through the post-critical set, as shown by  Meyer \cite{BM}\cite{M13}. Thus, the framework of \cite{Belk2022} does not apply to the present situation.   Nevertheless, our proof shows that as we lift a connected graph containing $P_F$ by iterating $F$, the complexity of each edge eventually becomes bounded by a constant depending only on $F$, and consequently we reach a finite neighborhood of the invariant graph $T$.

\subsection{The  organization of  the paper.} In Section~\ref{sec:gluing}, we recall the gluing construction $\mathcal{G}$ and construct the invariant graph $T$ for the topological gluing $F$. In Section~\ref{sec:arcs}, we introduce an admissible family $\mathcal{F}$ of separating arcs and define the complexity of a curve with respect to $\mathcal{F}$. We then modify $\mathcal{F}$ to a periodic family $\mathcal{F}_0$ such  that there exist  constants $s \ge 1$ and
 $c>0$ satisfying  $N_{\mathcal{F}}(\eta) \le c\, N_{\mathcal{F}_0}(C)$ for every non-peripheral curve $C$ and every component $\eta$ of $F^{-s}(C)$. In Section~\ref{sec:complexity}, we prove that the complexity of a curve with respect to $\mathcal{F}_0$ does not increase under pullback by $F$ and establish the key decay estimate. We then assemble these ingredients to complete the proof of the main theorem.

\vspace{4mm}

\noindent$\bold{Acknowledgement}$. We would like to thank  Gaofei Zhang for suggesting the problem.

\section{The Invariant Graph For the topological gluing}
\label{sec:gluing}

\subsection{Hubbard trees}

Let $f$ be a  polynomial of degree \(d\ge 2\). Recall that the \emph{filled-in Julia set} of a polynomial \(f\) is
\[
K(f)=\{z\in\widehat{\mathbb C}: f^n(z)\not\to\infty\ \text{as } n\to\infty\},
\]
which is known
to be compact and full. The postcritical set is defined as
\[
P_f=\bigcup_{n\ge 1} f^n(\Omega_f),
\]
where \(\Omega_f\) denotes the set of critical points of \(f\). The behavior of the critical points under the iteration of  the polynomial dramatically influences the topology of the set $K(f)$. We call a polynomial $f$   postcritically finite (PCF) if $P_f$ is a finite set, that is,  every critical orbit is either periodic or eventually periodic. For such polynomials, the filled-in Julia set $K(f)$ is connected and locally connected.

It is well known  that for a PCF polynomial $f$ of degree $d \ge 2$,  the set  $K(f) \setminus \{z\}$  has only  finitely many   connected components for any $z \in K(f)$. Consequently, the filled-in Julia set $K(f)$ admits a tree-like structure \cite{DH}.  To  capture this combinatorial structure more concretely, one introduces the \textit{Hubbard tree}, denoted by $H_f$. It is the unique smallest invariant tree embedded in the filled-in Julia set that contains all post-critical points of the polynomial (subject to a regularity condition within bounded Fatou components).  For a detailed treatment, we refer the reader to~\cite{DH1}\cite{DH2}\cite{Po}.

For  edges \(\tilde{e}\) and $e$ of \(H_f\) and integers $k, n \ge 1$,
 we say that \(f^k(\tilde{e})\) \emph{covers} \(e\) $n$ times, if there are $n$ disjoint subintervals  of $\tilde{e}$, say $\tilde{e}_i, 1\le i \le n$, such that $f^k: \tilde{e}_i \to e$ is a homeomorphism.

\begin{defi}
Suppose $f$ is a PCF polynomial and $e$ is an edge  of the Hubbard tree  for  $f$.  We call $e$
 \emph{expanding}   if there exists an integer $k \ge 1$ such that $f^k(e)$ covers $e$ at least twice, and  \emph{attracting} if there exists an integer $k\ge 1$ such that $f^k: e \to e$ is a homeomorphism.
\end{defi}

It is easy  to see that an expanding edge contains infinitely many repelling periodic points of $f$, whereas an attracting edge must have at least one of its two endpoints to be a super-attracting periodic point\cite{Po}. In Figure~\ref{z^2-1}, the Hubbard tree for $z^2-1$ contains a single attracting  edge for
 which both the endpoints belong to a super-attracting cycle.
\begin{figure}
    \centering
    \includegraphics[width=0.7\linewidth]{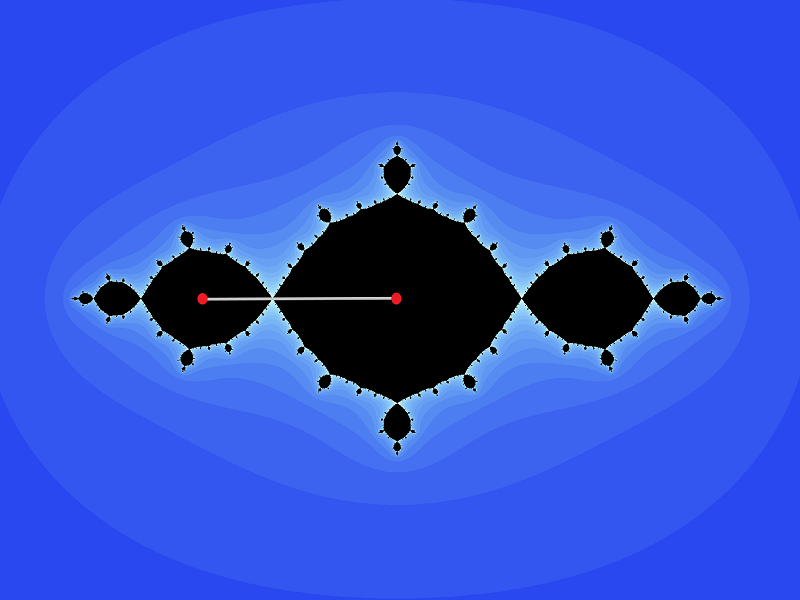}
   \caption{The Hubbard tree of $z^2-1$. The attracting edge (gray) is the union of two internal rays from $0$ and $-1$ to their common  land points.}
    \label{z^2-1}
\end{figure}

The next  lemma is crucial in the construction of the  family $\mathcal{F}$.

\begin{lema}[\cite{WangZhang}, Lemma 2.1]
\label{ea}
Let $e$ be an  edge of a Hubbard tree. Then  there exists an integer $t\ge 1$ such that $f^t(e)$ covers either an expanding edge or an attracting edge.
\end{lema}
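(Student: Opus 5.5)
The plan is to reduce the statement to a finite combinatorial problem: the dynamics of edges of $H_f$ under $f$, recorded by a directed covering graph. The key input is that $H_f$ has finitely many edges (taking the vertex set to contain $P_f$, the critical points and the branch points) and is forward invariant. Then for every edge $\tilde e$ the image $f(\tilde e)$ is a finite union of edges, and $f$ is injective on the interior of $\tilde e$, since critical points are vertices. Form the directed graph $\Gamma$ whose vertices are the edges of $H_f$, with an arrow $\tilde e\to e$ whenever $f(\tilde e)\supseteq e$, weighted by the number of times $f(\tilde e)$ covers $e$. Because $f|_{\tilde e}$ is injective, each weight is $0$ or $1$ for $k=1$. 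Covering counts for $f^k$ are then obtained by counting directed paths of length $k$ in $\Gamma$: a path $e_0\to e_1\to\cdots\to e_k$ gives a subinterval of $e_0$ mapped homeomorphically by $f^k$ onto $e_k$, and distinct paths give disjoint subintervals (up to endpoints, which can be trimmed).

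Fix an edge $e$. Every vertex of $\Gamma$ has out-degree at least $1$, since $f(e)$ is a nondegenerate arc (f is a nonconstant open map) and so contains some edge. Following arrows from $e$ therefore produces an infinite path in a finite graph, which eventually enters a strongly connected component $S$ of $\Gamma$ containing a cycle. Let $t$ be the time at which the path first hits an edge $e'$ lying on a cycle, say $e'\to\cdots\to e'$ of length $m$; then $f^t(e)$ covers $e'$. There are two cases. First, suppose some vertex of the cycle through $e'$ has a second path of length $m$ back to $e'$, i.e.\ the number of closed paths of length $m'$ at $e'$ is at least $2$ for some $m'$. Then $f^{m'}(e')$ covers $e'$ twice and $e'$ is expanding. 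Otherwise, the strongly connected component of $e'$ is a single simple cycle with only one closed path per period. Then $f^m$ maps $e'$ onto a subarc containing $e'$, covering it exactly once. Since $f^m(e')$ is an arc covering $e'$ once, and every edge it covers lies in the cycle's component or outside it, I expect to show that $f^m(e')=e'$ and that $f^m:e'\to e'$ is a homeomorphism, so $e'$ is attracting.

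The main obstacle is this last step. In the single-cycle case, $f^m(e')$ may contain $e'$ together with further edges leaving the strongly connected component, so $f^m|_{e'}$ is not obviously a homeomorphism onto $e'$. To handle this, I would choose the edge differently: among the edges covered along forward orbits, pick one lying in a \emph{terminal} strongly connected component, i.e.\ one with no arrows leaving it, which exists because $\Gamma$ is finite. In a terminal component each $f(\tilde e)$ is a union of edges inside the component. If the component is not a simple cycle, some vertex has out-degree at least $2$ within it, and a counting argument on paths gives a double cover, hence an expanding edge. If it is a simple cycle, each $f(\tilde e)$ is exactly the next edge, and the composite $f^m:e'\to e'$ is a homeomorphism, hence attracting. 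Finally, since $e$ reaches this terminal component along some path, $f^t(e)$ covers such an edge. This yields the lemma.
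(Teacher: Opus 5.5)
Your final argument, via a terminal (sink) strongly connected component of the edge-transition graph $\Gamma$, is correct and gives a complete proof. The paper itself does not prove this lemma; it quotes it from \cite{WangZhang}, so there is no in-text argument to compare against.

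What your route buys is that the lemma becomes a purely combinatorial fact about finite Markov maps. It uses only that vertices map to vertices, that $f$ is injective on edges, and that each edge maps onto a union of edges. It makes no appeal to expansion on the Julia set or to the structure of superattracting basins. You were right to abandon the first-hit-cycle attempt. Nothing in that local argument prevents an edge on a simple cycle of $\Gamma$ from covering itself exactly once while $f^m(e')$ leaks into edges outside its component, and such an edge would be neither expanding nor attracting. Terminality is exactly what excludes this: every arrow out of a cycle edge then stays on the cycle, so $f(\tilde e)$ must be precisely the next edge, and $f^m\colon e'\to e'$ is a homeomorphism.

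When you write it up, make three points explicit.
\begin{enumerate}
\item State that the vertex set contains the critical points lying on $H_f$. This is what gives injectivity on edges, $f(V)\subseteq V$, and one-step weights in $\{0,1\}$. The notions \emph{expanding} and \emph{attracting} are then taken with respect to this edge set.
\item Spell out the double cover in the non-cycle case. Suppose $u\in S$ has two distinct successors $v_1\neq v_2$ in $S$. Choose paths $Q_i$ in $S$ from $v_i$ back to $u$, and let $W_i$ be the closed walk consisting of the arrow $u\to v_i$ followed by $Q_i$. Then $W_1W_2$ and $W_2W_1$ are distinct closed walks at $u$ of the same length $n$, since they differ at the second vertex. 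Hence $f^n(u)$ covers $u$ twice.
\item Note that $t\ge 1$ is available even when $e$ itself lies in $S$. A sink component of a graph in which every vertex has out-degree at least $1$ contains a cycle, so it has paths of positive length between any two of its vertices.
\end{enumerate}
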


\subsection{The gluing operator $\mathcal{G}$}

 Suppose $f$ and $g$ are two PCF rational maps of degree $d_1$ and $d_2$,  respectively. Assume  each has a marked   immediate super-attracting basin, denoted  $D_f$ and $D_g$, whose boundaries are Jordan curves   and which have    the same degree $d_0$. We further assume the \textit{independence condition} that the other critical orbits of $f$ (resp. $g$) are disjoint from  $D_f$ (resp. $D_g$). Let  $\mathcal{R}_{d, d_0}$   denote the space of rational maps  satisfying  the above properties.

Now we can construct a topological map $F: S^2 \to S^2$ by gluing $f$ and $g$ along  $\partial D_f$ and $\partial D_g$.  More precisely, let $\mathbb{D}$ be the unit disk and let $\phi: D_f \to \mathbb{D}$, $\psi: D_g \to \mathbb{D}$ be the holomorphic isomorphisms that  conjugate $f$ and $g$ to the power map $z \to z^{d_0}$. For each $1 \le k \le d_0-1$, let $R_k$ denote the rigid rotation given by the angle $2 k \pi i/(d_0-1)$, that is,
$$
R_k: z \mapsto e^{2 k \pi i/(d_0-1)} z.
$$
Let $S$ denote the conjugation map, that is, $S(z) = \bar{z}$. Then
there is a homeomorphism
\[\label{m} \Phi = \phi^{-1} \circ R_k \circ S \circ \psi: \partial D_g \to \partial D_f,\]  which reverses the orientation.  We   extend $\Phi$ to a homeomorphism of the sphere so that it maps $\overline { D_g}$ to $ D_f^c$ and maps $D_g^c$ to $\overline {D_f}$.  We then   glue the complements  $ D_f^c$ and $D_g^c$ along their boundaries by identifying $x \in \partial D_g^c$  with  $\Phi(x) \in \partial D_f^c$.  The topological space obtained in this way,
$$
X = D_f^c \bigsqcup_{x \sim \Phi(x)} D_g^c,
$$  is homeomophic to $S^2$.   Now we  define a topological map
$$
F: X \to X
$$
by setting
\[\label{oo}
F(z) =
\begin{cases}
f(z) & \text{ for $z \in D_f^c$ and $f(z) \in D_f^c$}, \\
\Phi^{-1}\circ f(z) & \text{ for $z \in D_f^c$ and $f(z) \in D_f$}, \\
g(z) & \text{ for $z \in D_g^c$ and $g(z) \in D_g^c$}, \\
\Phi\circ g(z) & \text{ for $z \in D_g^c$ and $g(z) \in D_g$}.
\end{cases}
\]

\noindent We call $F$  the $\emph{topological gluing}$ of $f$ and $g$.    In the case that $F$ is post-critically finite and has no Thurston obstructions, by Thurston's characterization theorem \cite{DH},
there is a rational map $G$ which is combinatorially equivalent to $F$.  In this case, we  say $G$ is the $\emph{geometric gluing}$ of $f$ and $g$   and denote
$$
G = \mathcal{G}(f, g).
$$

\begin{thm}[Zhang, \cite{Zhang2024}]\label{THZ}
Let $d_0 \ge 2$.  Suppose $f  \in \mathcal{R}_{d_1,d_0}$  and $g  \in \mathcal{R}_{d_2,d_0}$ such that at least one of them is a polynomial.
Then $f$ and $g$  can be always glued  into a rational map $R$ of degree $d_1 + d_2 -d_0$. In particular, two polynomials can be always glued into a rational map.
\end{thm}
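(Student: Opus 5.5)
Since one of $f,g$ is a polynomial, the plan is to apply Thurston's characterization theorem to the topological gluing $F$. Three things must be checked: $F$ is a well-defined branched cover of degree $d_1+d_2-d_0$; it is post-critically finite; and it has no Thurston obstruction. Say $f$ is the polynomial, with $D_f$ the marked finite basin.

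\emph{Degree and post-critical finiteness.} On $D_f^c$ the map $F$ restricted to $\partial D_f$ is a degree-$d_0$ circle covering. The definition of $F$ on the two sides matches along $\partial D_f\sim\partial D_g$, because $\Phi$ intertwines $z^{d_0}$ with itself: conjugation by $R_k\circ S$ commutes with $z\mapsto z^{d_0}$, since $R_k^{d_0}=R_k$ when the rotation angle is a multiple of $2\pi/(d_0-1)$. So $F$ is a continuous branched cover. To count degree, take a generic point in the $D_f^c$ side. It has $d_1-d_0$ preimages under $f$ outside $\overline{D_f}$ and, via $\Phi$, $d_2-d_0$ preimages from the $g$-side, which gives $d_1+d_2-d_0$ in total. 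The critical points of $F$ are those of $f$ in $D_f^c$ and of $g$ in $D_g^c$; the centres of the basins have been cut away. By the independence condition, their orbits never enter $D_f$ or $D_g$. Hence they follow the $f$- and $g$-orbits unchanged, and $P_F=(P_f\setminus D_f)\cup(P_g\setminus D_g)$ is finite.

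\emph{No obstruction.} This is the main step. Suppose $\Gamma$ is an irreducible Thurston obstruction, that is, an invariant multicurve with leading eigenvalue $\ge1$. The gluing curve $\mathbb{T}=\partial D_f$ is $F$-invariant and not in $P_F$, since the independence condition puts no post-critical points on it. I would first isotope $\Gamma$ to intersect $\mathbb{T}$ minimally. Then I would split into two cases according to whether $\Gamma\cdot\mathbb{T}=0$.

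If every curve of $\Gamma$ is disjoint from $\mathbb{T}$, each curve lies in the $f$-side or the $g$-side. Pulling back under $F$ stays on the same side, because preimages of $\mathbb{T}$ are $\mathbb{T}$ together with curves bounding disks mapped into the other side. So the obstruction restricts to an obstruction for $f$ or for $g$ with $D_f$ (resp. $D_g$) collapsed to its marked point. This contradicts Thurston rigidity of the rational maps $f$ and $g$. One side is a polynomial, where Levy cycles and obstructions are excluded by the Hubbard tree or Berstein–Levy theory.

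If some curve crosses $\mathbb{T}$, I would use the standard arc argument (Pilgrim–Tan, or Zhang's ``arc intersecting obstruction'' lemma). The polynomial side provides a forward-invariant family of arcs, namely the external rays of $f$ from $\infty$ to $\mathbb{T}$, that is disjoint from $P_F$ and whose preimages are ray families. Intersection numbers of $\Gamma$ with these arcs then force the Thurston matrix to be strictly contracting on curves meeting $\mathbb{T}$, via Thurston's inequality $\lambda<1$ for curves crossing an invariant arc system. I expect making this arc argument rigorous to be the hardest part. A possible shortcut is to cite directly that $\Gamma$ must then be a Levy cycle, and to rule that out because the periodic arcs on the polynomial side are expanding, using Lemma~\ref{ea} on the Hubbard tree edges.

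With no obstruction, Thurston's theorem yields a rational map $R$ that is combinatorially equivalent to $F$, of degree $d_1+d_2-d_0$. The hyperbolic orbifold condition holds because $F$ has a superattracting periodic critical point with the local degree of $\infty$ for $f$, which excludes Lattès-type orbifolds.
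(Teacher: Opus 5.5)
\paragraph{Main gap: obstructions crossing $\mathbb{T}$.} Your overall frame is reasonable: apply Thurston's characterization to the topological gluing $F$, and split an irreducible obstruction $\Gamma$ by whether it meets $\mathbb{T}$. Case~1 is essentially fine once you say why the essential pullbacks stay on one side. Components of $F^{-1}(\gamma)$ on the opposite side lie in the preimage disks of the marked basin. By independence these disks contain no point of $P_F$, so those components are trivial. The curve homotopic to $\mathbb{T}$, if it exists, needs separate treatment: it becomes peripheral after collapsing the basin, but its only essential preimage is itself with degree $d_0$.

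The genuine gap is Case~2, which is the whole content of the theorem, and you do not prove it.
\begin{itemize}
\item The arc family you name cannot work. An external ray from $\infty$ to a point of $\mathbb{T}\setminus P_F$ has a free endpoint. Any curve can be slid off that end, so the minimal intersection number with the ray is $0$. It is not an arc of $(S^2,P_F)$ in the Pilgrim--Tan sense, and there is nothing to count.
\item There is no ``Thurston inequality $\lambda<1$ for curves crossing an invariant arc system''. The arc-intersecting-obstruction mechanism yields instead that $\Gamma$ is a L\'evy cycle. That cycle then has to be excluded by a further argument.
\end{itemize}

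This is exactly what the Remark after Theorem~\ref{THZ} sketches for two polynomials. One uses closed separating arcs anchored at postcritical points, in particular type~III arcs: from $\infty$ along an $f$-ray to a periodic $a\in\mathbb{T}$, along a $\tilde g$-ray to $0$, and back through $b$. Each arc maps homeomorphically onto a periodic arc of $\mathcal{F}_0$. Put $\Gamma$ in minimal position. If $F^{-n}(\gamma)$ had components $\gamma'\sim\gamma$ and $\eta'\sim\eta\in\Gamma$, a periodic arc $S$ meeting both gives disjoint sets $\gamma'\cap S$ and $\eta'\cap S$. These map injectively onto distinct subsets of $\gamma\cap S$, contradicting minimality. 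Hence $\Gamma$ is a L\'evy cycle, which is ruled out by contraction of the orbifold metric under inverse branches. Zhang's original proof uses a count of maximal arcs instead.

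\paragraph{The shortcut, and smaller errors.} Your proposed shortcut supplies neither step. Lemma~\ref{ea} concerns edges of the Hubbard tree of $f$ and says nothing about a degree-one cycle of curves crossing $\mathbb{T}$. Moreover, in the theorem as stated $g$ may be merely rational. Then $D_g^c$ carries no rays, and you must explain how the $g$-side pieces of $\gamma$ are controlled using only the polynomial side.

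There are also some smaller errors.
\begin{itemize}
\item The degree count is wrong as written. A point on the $f$-side has all $d_1$ of its $f$-preimages on the $f$-side, since none lies in $D_f$. Only the $g$-side loses the $d_0$ preimages in $D_g$. Your $d_1-d_0$ and $d_2-d_0$ add up to $d_1+d_2-2d_0$, not $d_1+d_2-d_0$.
\item Independence only keeps critical orbits out of the open basins, so $P_F\cap\mathbb{T}$ can be nonempty. The paper's type~III arcs are chosen precisely to separate postcritical points on $\mathbb{T}$. So ``minimal position with $\mathbb{T}$'' must be understood relative to these points.
\item A periodic critical point excludes Latt\`es maps, but not the $(\infty,\infty)$ and $(2,2,\infty)$ orbifolds. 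You need, for instance, that for $d_2>d_0$ the point $\infty$ has non-postcritical preimages on the $g$-side. Then it is not fully ramified and the orbifold is hyperbolic.
\end{itemize}
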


\begin{rmk}
We observe that, when considering only the gluing of two polynomials, the independence condition can be dispensed with. More precisely, we may allow the critical points of $f$ and $g$ to be captured by the marked immediate basins, provided that their forward orbits are finite.  To prove the absence of obstructions in the topological gluing, we replace the use of the number of maximal arcs employed in \cite{Zhang2024} with the intersection number of the curves in the obstruction with a family of separating arcs. This family is analogous to the collection $\mathcal{F}$ constructed in the next section, with the property that each arc in $\mathcal{F}$ is mapped homeomorphically onto some periodic arc in $\mathcal{F}_0$.

The detailed reasoning is as follows. Suppose the gluing of $f$ and $g$ admits an obstruction, and let $\Gamma$ be an irreducible one. Then every element of $\Gamma$ intersects some periodic arc in $\mathcal{F}_0$. We may further assume that the intersection number of each element of $\Gamma$ with the periodic arcs in $\mathcal{F}_0$ is minimal. Under this minimality assumption, we show that each $\gamma \in \Gamma$ lies in a L\'evy cycle. Indeed, if for some $n \ge 1$, the set of components of $f^{-n}(\gamma)$ contains a component $\gamma'$ homotopic to $\gamma$ and another component $\eta'$ homotopic to some $\eta \in \Gamma$, then there exists a periodic arc $S \in \mathcal{F}_0$ that intersects both $\gamma'$ and $\eta'$.  The sets  $\gamma' \cap S$ and $\eta' \cap S$  are disjoint, and each is mapped homeomorphically onto a distinct subset of $\gamma \cap S$.  This contradicts the minimality of $\gamma$. Hence, $\gamma$ must belong to a L\'evy cycle. However, this contradicts the fact that the inverse branch of the topological gluing contracts the orbifold metric.
With a slight modification of the proof presented in this paper, the main theorem also holds for such rational maps (as communicated by G. Zhang).
\end{rmk}

\subsection{Topological model and invariant graph}

Suppose  $f \in \mathcal{R}_{d_1,d_0}$ and $g \in \mathcal{R}_{d_2,d_0}$ are both polynomials. Without loss of generality, we identify the unit disk $\mathbb{D}$ with the marked immediate superattracting basin for both maps, and assume that $f$ and $g$ act as  $z \mapsto z^{d_0}$ on $\mathbb{D}$. Denote by $H_f$ and $H_g$ the Hubbard trees of $f$ and $g$, respectively, and set
\[
H_f^{\mathrm{ext}} = H_f \cap \mathbb{D}^c,\quad
H_g^{\mathrm{ext}} = H_g \cap \mathbb{D}^c,
\]
as shown in Figures~\ref{f01} and~\ref{f02}.

As in  Section~2.2, we construct  the topological model $F$ by gluing two copies of  $\mathbb{D}^c$ along the unit circle $\Bbb T$ through the orientation-reversing homeomorphism
\[
\iota(\zeta)=\frac{e^{2\pi i k/(d_0-1)}}{\zeta},\quad 1\le k\le d_0-1,
\]
for a fixed integer $k$.  The topological gluing $F : S^2 \to S^2$ is then given by
\[\label{oo}
F(z) =
\begin{cases}
f(z) & \text{ for $z \in \mathbb{D}^c$}, \\
\iota^{-1}\circ g\circ \iota(z) & \text{ for $z \in \mathbb{D}$},
\end{cases}
\]
which acts as $z\mapsto z^{d_0}$ on the gluing circle $\mathbb{T}$. Note that $ \iota = \iota^{-1}$.

 Define the   graph
\[
T = H_f^{\mathrm{ext}} \;\cup\; \iota(H_g^{\mathrm{ext}}) \;\cup\; {\mathbb{T}},
\]
with   $$V(T) = (V(H_f) - \Bbb D) \bigcup \iota(V(H_g) - \Bbb D))$$  where $V(\cdot)$ denotes the vertex set of a graph. We call each component of $T - V(T)$ an edge of $T$.

For brevity, we set  $T^{\mathrm{ext}}=H_f^{\mathrm{ext}}$ and $T^{\mathrm{int}}=\iota(H_g^{\mathrm{ext}})$.  We thus have
\[
T = T^{\mathrm{ext}} \cup T^{\mathrm{int}} \cup {\mathbb{T}},
\]
see Figure~\ref{f}. By construction, $T$ is a finite connected graph containing $P_F$ and satisfying $$F(T) \subset T,$$ which follows from  the independence condition on the critical orbits of $f$ and $g$.   Since $G = \mathcal{G}(f,g)$ is combinatorially equivalent to  $F$,  it suffices to prove the main theorem for $F$.
\begin{figure}[H]
\centering
\begin{minipage}[t]{0.43\textwidth}
\includegraphics[width=1.1\textwidth]{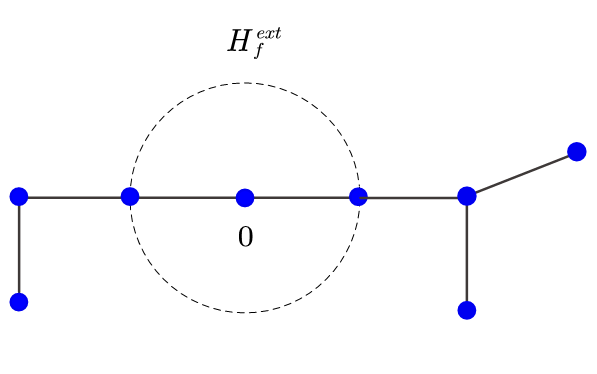}
\caption{The part $H_f^{\mathrm{ext}}$ of $H_f$ outside $\mathbb{D}$.}
\label{f01}
\end{minipage}
\hfill
\begin{minipage}[t]{0.43\textwidth}
\includegraphics[width=0.82\textwidth]{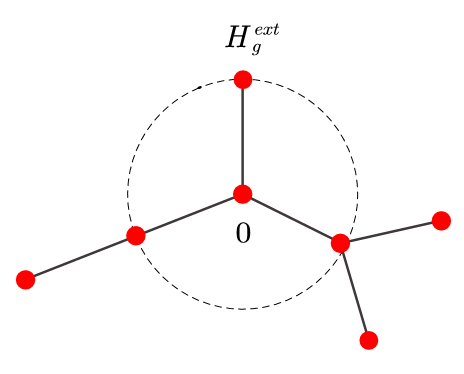}
\caption{The part $H_g^{\mathrm{ext}}$ of $H_g$ outside $\mathbb{D}$.}
\label{f02}
\end{minipage}
\end{figure}
\begin{figure}[!htpb]
\centering
\includegraphics[width=84mm]{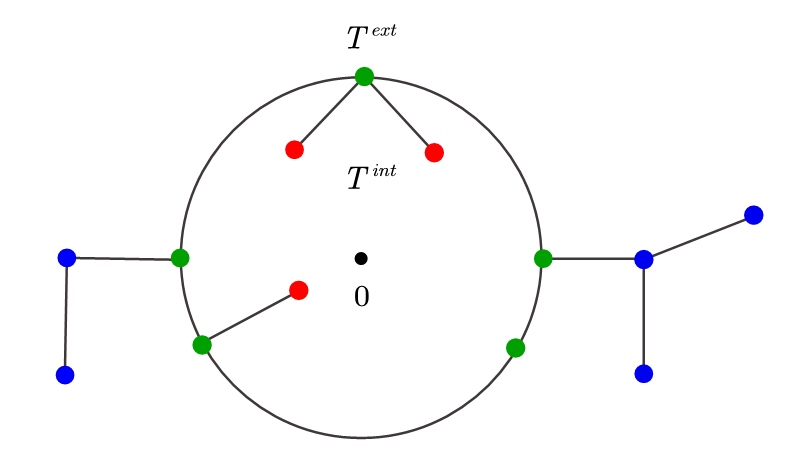}
\caption{The invariant graph $T$ gluing $H_f$ and $H_g$ along $\mathbb{T}$. }
\label{f}
\end{figure}

\section{Separating Arcs for $F$}
\label{sec:arcs}

In this section, we construct a finite family of separating arcs for the topological map $F$. These arcs will be used to define a numerical complexity for non-peripheral curves, which is the main tool in our proof.

\subsection{Complexity with respect to $\mathcal{F}$}
We call  $L \subset \widehat{\Bbb C}$  a $\emph{separating arc}$ if $L$ is a piecewise smooth Jordan curve.

\begin{defi} A family  $\mathcal{F}$   of separating arcs is called  \emph{admissible}
  if \begin{itemize}  \item the union $\bigcup_{L\in\mathcal{F}} L$ is connected, and \item each component of $\widehat{\mathbb{C}}\setminus\bigcup_{L\in\mathcal{F}}L$ contains at most one post-critical point of $P_F$. \end{itemize}
\end{defi}

Let \(\mathcal{F}\) be an admissible family of separating arcs. Let $N(\cdot, \cdot)$  denote the  intersection number of two curves.   Assume that  $C\subset \widehat{\mathbb{C}} \setminus P_F$ is a
 non-peripheral curve. We define the \textit{complexity} of $C$ with respect to   $\mathcal{F}$  as
 \begin{equation}\label{cpx}
N_{\mathcal{F}}(C) \;=\; \min_{\gamma\sim C}\; \sum_{L\in\mathcal{F}} N(\gamma,L),
\end{equation}
where the minimum is taken over all simple closed curves $\gamma$ homotopic to $C$ in $\widehat{\mathbb{C}}\setminus P_F$.

Let us explain why this complexity captures essential information about the homotopy class of \(C\). Take a minimal representative \(\gamma\) of a non-peripheral curve \(C\) that attains \(N_{\mathcal{F}}(C)\). Since \(\bigcup_{L\in\mathcal{F}}L\) is connected, each component of \(\widehat{\mathbb{C}}\setminus\bigcup_{L\in\mathcal{F}}L\) is simply connected, and by admissibility each such component has piecewise smooth boundary and contains at most one point of \(P_F\).  Note that the intersection points  cut \(\gamma\) into \(N_{\mathcal{F}}(C)\) subarcs, each contained in a single component of \(\widehat{\mathbb{C}}\setminus\bigcup_{L\in\mathcal{F}}L\).  Hence there exists a constant \(\delta>0\) depending only on \(\mathcal{F}\) such that the length of each  subarc with respect to the spherical metric is bounded above by \(\delta\) up to homotopy. Consequently,
\[
\ell_\rho(\gamma) \le \delta \cdot N_{\mathcal{F}}(C),
\]
with \(\rho\) being the spherical metric on \(\widehat{\mathbb{C}}\). It follows that an upper bound on \(N_{\mathcal{F}}(C)\) forces a uniform bound on the spherical length of \(C\). Moreover, since \(P_F\) is finite, only finitely many homotopy classes of curves can have bounded length. Thus it suffices to show that
the  complexity of the pull back, with respect to $\mathcal{F}$,  will be eventually bounded by some uniform constant.

\subsection{Constructing   an  admissible   family \(\mathcal{F}\)} We will construct three types of separating arcs $L$ according to whether $L$ belongs to the $f$-side, $g$-side or crosses the unit circle.

Let us first describe the construction of separating arcs of type I, which are composed of external rays (and internal rays, when an attracting basin is involved) of the polynomial $f$.
For each $\tilde{e} \subset T^{\mathrm{ext}}$, by Lemma~\ref{ea} there exists    an integer $t_{\tilde{e}} \ge 1$ such that $F^{t_{\tilde{e}} }(\tilde{e})$ covers either an expanding edge or an attracting edge, denoted $e$.

  If $e$ is expanding, we can select a bi-accessible repelling periodic point $x$ in the interior of $e$, say of period $p$, and an open subinterval $I \subset e$ containing $x$ such that $F^p: I \to e$ is an orientation-preserving homeomorphism.   Then choose a preimage $\tilde{x} \in \tilde{e}$ of $x$ under $F^{t_{\tilde{e}}}$  such that  $\tilde{x}$ lies in the interior of $\tilde{e}$ and is bi-accessible.
 Let $R_1^\infty$ and $R_2^\infty$ be the two external rays of $f$  landing at $\tilde{x}$, denote
\[
L_{\tilde{e}} =R_{1}^\infty \cup\{\tilde{x}\}\cup R_{2}^\infty.
\]
It follows that $L_{\tilde{e}}$  is a separating arc,  and its image $F^{t_{\tilde{e}}} (L_{\tilde{e}})$  is periodic.

\begin{figure}[H]
    \centering
    \includegraphics[width=1.02\linewidth]{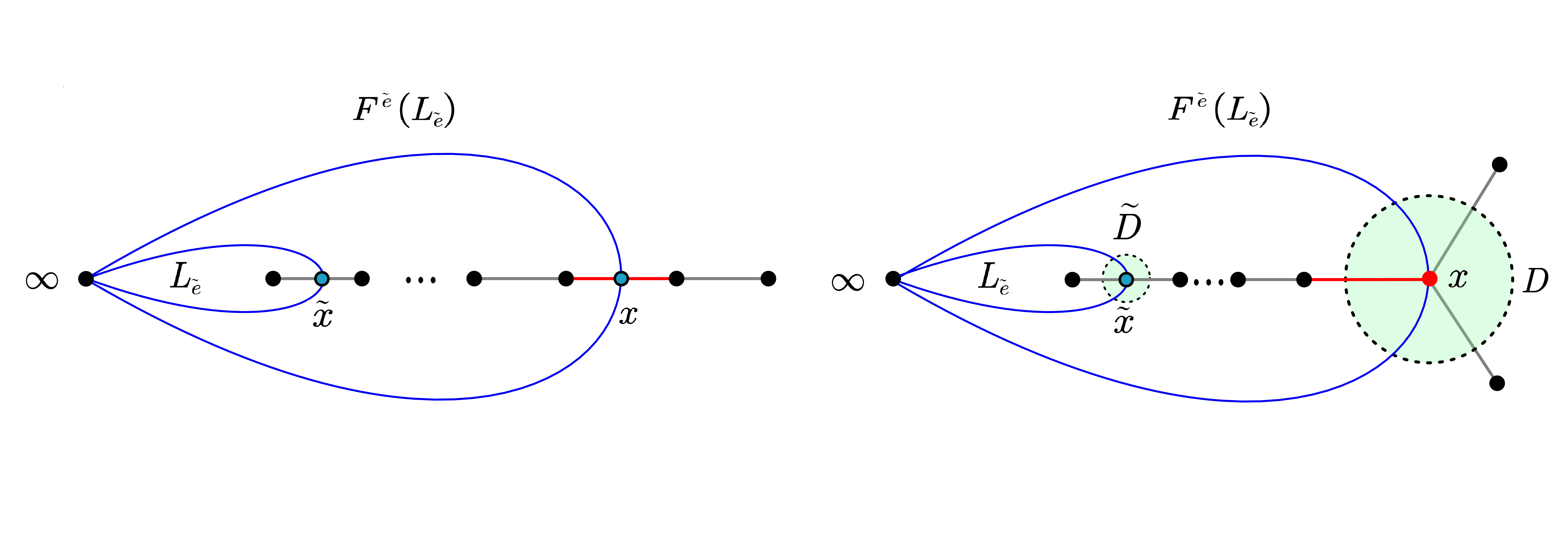}
     \caption{Two cases of separating arcs of type~I are illustrated - expanding case and attracting case. }
\label{fig:placeholder}
\end{figure}

Suppose instead that $e$ is attracting.
Let  $x$ be the super-attracting periodic point of $e$ (necessarily an endpoint of that attracting edge) and choose $\tilde{x}\in \tilde{e}$ such that $F^{t_{\tilde{e}}}(\tilde{x})=x$. Let \(D\) denote the immediate basin of \(x\).  Now choose  two periodic points   $x_1, x_2 \in \partial D\setminus P_F$ such that at each of them,  exactly one external ray lands. Let $\tilde{D}$ be the pre-image of $D$ containing $\tilde{x}$ and $\tilde{x}_1, \tilde{x}_2 \in \partial \tilde{D}$ such that $F^{t_{\tilde{e}}}(\tilde{x}_i) = x_i$ for $i = 1, 2$.
 Let $R_1^\infty$ and $R_1^{\tilde{D}}$ denote the external ray and (pre)internal ray landing at  $\tilde{x}_1$, $R_2^\infty$ and $R_2^{\tilde{D}}$   the external ray and (pre)internal ray landing at $\tilde{x}_2$,  respectively.
 Define
\[
L_{\tilde{e}}  = R_1^\infty \cup \{\tilde{x}_1\} \cup R_1^{\tilde{D}}  \cup \{\tilde{x}\} \cup R_2^{\tilde{D}} \cup \{\tilde{x}_2\} \cup R_2^\infty.
\]
 It follows that $L_{\tilde{e}}$ is a separating arc and its image $F^{t_{\tilde{e}}} (L_{\tilde{e}})$   is periodic.

The construction of
separating arcs of type II  is completely the same as that of type I,   by considering \(\tilde{e}\)  in  \(T^{\mathrm{int}}\) and  with \(f\) replaced by \(\tilde{g} = \iota^{-1} \circ g \circ \iota. \)

%If $e\subset T^{\mathrm{int}}$, the construction is analogous, with all rays taken as internal rays -- there exist two distinct (pre)internal rays from $x$ and two distinct internal rays landing at their endpoints, yielding a type~II arc $L_e$ with both endpoints at $0$, and $F^{t_e}(L_e)$ consists of two periodic internal rays from $c_0$ yielding a type~II arc $L_{e'}$.

To construct a separating arc of type III, we first choose two distinct  periodic points, say ${a}, {b} \in \Bbb T \setminus P_F$  such that at  each of them, exactly one $f$-ray and one  $\tilde{g}$-ray land.  Since  the dynamics of $F|_{\mathbb{T}}$ is the expanding covering map $z\mapsto z^{d_0}$ with $d_0 \ge 2$, such points are dense in $\Bbb T$.  Let  $R_1^\infty$, $R_1^0$ be the  $f$-ray and
$\tilde{g}$-ray  landing at ${a}$, and $R_2^\infty$,  $R_2^0$ be the $f$-ray and  $\tilde{g}$-ray   landing at  ${b}$. Then
\[
L = R_1^\infty \cup \{{a}\} \cup R_1^0 \cup R_2^0\cup \{{b} \} \cup R_2^\infty
\]
 is a separating  arc containing $\infty$ and $0$ and intersecting $\mathbb{T}$ exactly at ${a}, {b}$. Since both ${a}$ and ${b}$ are periodic,   it follows that $L$ is   periodic.

To construct an admissible family $\mathcal{F}$ consisting of the above three types separating arcs, we need only choose a type I  arc for each \(\tilde{e}\)  in  \(T^{\mathrm{ext}}\)  and a type II arc for each  \(\tilde{e}\)  in  \(T^{\mathrm{int}}\) and sufficiently many type III arcs so that the post-critical points in $\Bbb T$  are well separated. Henceforth, we fix such an admissible family and denote it by $\mathcal{F}$.

\subsection{The periodic family \(\mathcal{F}_0\) and its adjoint arcs}

For each arc $L \in \mathcal{F}$, let $t_L\ge1$ be the integer associated to its corresponding separating arc as given by above section such that $F^{t_L}(L)$ is a periodic arc. Define
\[
\mathcal{F}_0 = \bigcup_{L \in \mathcal{F}} \mathcal{O}\bigl(F^{t_L}(L)\bigr),
\]
where $\mathcal{O}(F^{t_L}(L))$ denotes the forward orbit of $F^{t_L}(L)$ under $F$.  Then $\mathcal{F}_0$ is a finite family of periodic arcs. Let $p$ be the common period of these arcs such that  $F^p(L)=L$ for every $L\in\mathcal{F}_0$.

In general, the family $\mathcal{F}_0$ does not satisfy the separation property required of an admissible family. Nevertheless, we can define the complexity with respect to $\mathcal{F}_0$ in the same manner as in (\ref{cpx}). The following lemma shows that the complexity defined using the periodic family $\mathcal{F}_0$ dominates that defined using the admissible family $\mathcal{F}$, up to a multiplicative constant that depends only on $\mathcal{F}$ and $\mathcal{F}_0$, which in turn depend only on $F$.

\begin{lema}
\label{lem:eq}
There exist constants $s\ge 1$ and $c>0$, depending only on $\mathcal{F}$ and $\mathcal{F}_0$, such that for every non-peripheral curve $C$ and every component $\eta$ of $F^{-s}(C)$,
\[
N_{\mathcal{F}}(\eta) \le c\, N_{\mathcal{F}_0}(C).
\]
\end{lema}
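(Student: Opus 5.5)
We take $s=\max_{L\in\mathcal{F}} t_L$ and relate $\mathcal{F}$ to $\mathcal{F}_0$ by lifting intersection points. For each $L\in\mathcal{F}$, the arc $L_0:=F^{t_L}(L)$ lies in $\mathcal{F}_0$. By construction $L$ is a union of finitely many (pre)rays and their landing points, and $F^{t_L}$ maps $L$ homeomorphically onto $L_0$. The point $\tilde x$ is not critical, and the rays avoid critical points because the critical points of $F$ off $\mathbb{T}$ lie in $T$ or in Fatou components, and the rays were chosen away from them. The only exception is the attracting case, where the internal rays pass through the pre-superattracting point $\tilde{x}$. Even there, $F^{t_L}$ restricted to each of the two subarcs of $L$ separated by $\tilde x$ is injective. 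Consequently, for any curve $\eta$,
\[
\#(\eta\cap L)\le 2\,\#\bigl(F^{t_L}(\eta)\cap L_0\bigr).
\]
Since the arcs of $\mathcal{F}_0$ are periodic, we may replace $s$ by a common value: for $s\ge t_L$ set $L_s:=F^{s-t_L}(L_0)$, which is again in $\mathcal{F}_0$. Then $F^{s}$ maps $L$ onto $L_s$, and the map is at most $2$-to-$1$ on points.

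Next we fix a minimal representative $\gamma$ of $C$ realizing $N_{\mathcal{F}_0}(C)$, chosen transverse to all arcs of $\mathcal{F}_0$ and avoiding the finitely many critical values. Let $\eta$ be a component of $F^{-s}(\gamma)$; it is a curve homotopic (in $\widehat{\mathbb{C}}\setminus F^{-s}(P_F)$, hence in $\widehat{\mathbb{C}}\setminus P_F$) to the corresponding component of $F^{-s}(C)$. Each point of $\eta\cap L$ maps under $F^s$ to a point of $\gamma\cap L_s$, and the restriction of $F^s$ to $L$ is at most $2$-to-$1$. Hence
\[
\#(\eta\cap L)\le 2\,\#(\gamma\cap L_s).
\]
Summing over $L\in\mathcal{F}$, and noting that each arc of $\mathcal{F}_0$ appears as $L_s$ for at most $|\mathcal{F}|$ arcs $L$, we obtain
\[
N_{\mathcal{F}}(\eta)\le\sum_{L\in\mathcal{F}}\#(\eta\cap L)\le 2|\mathcal{F}|\,N_{\mathcal{F}_0}(C).
\]
So $c=2|\mathcal{F}|$ works. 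The first inequality uses that $N_\mathcal{F}$ is a minimum over the homotopy class, so any representative gives an upper bound.

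The main obstacle is the subtle point that $\eta$ is only a homotopy representative of the pullback of $C$: we must ensure that pulling back the minimizer $\gamma$ (rather than $C$ itself) produces curves in the correct homotopy classes relative to $P_F$. This holds because a homotopy of $\gamma$ in $\widehat{\mathbb{C}}\setminus P_F$ lifts under the covering $F^s:\widehat{\mathbb{C}}\setminus F^{-s}(P_F)\to\widehat{\mathbb{C}}\setminus P_F$ to a homotopy of the components, and $P_F\subset F^{-s}(P_F)$ since $F(P_F)\subset P_F$. A second point to check carefully is the injectivity claim for type III arcs and for the (pre)internal rays in the attracting case. This should follow from the choice of $a,b$ and $\tilde x_i$ as points where exactly one ray of each kind lands, together with the fact that distinct rays map to distinct rays under $F^s$ restricted to a single arc. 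If full injectivity fails somewhere, we only need a uniform bound on the number of preimages in $L$ of a point of $L_s$, which is at most $\deg F^s$. That bound still gives a constant $c$ depending only on $\mathcal{F}$ and $\mathcal{F}_0$.
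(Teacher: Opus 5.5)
Your proof is correct and follows the paper's argument: you take $s=\max_{L\in\mathcal{F}} t_L$, lift a minimizing representative $\gamma$ of $C$ under $F^s$ (with the homotopy lifted through the covering, exactly as the paper does), and bound $\#(\eta\cap L)$ for each $L\in\mathcal{F}$ by $\#(\gamma\cap F^s(L))$ with $F^s(L)\in\mathcal{F}_0$, then sum over $\mathcal{F}$. The only difference is your factor $2$ (or $\deg F^s$) hedge, which is harmless; the paper instead uses that $F^s|_L$ is a homeomorphism onto its image, which also holds in the attracting case because $x_1\ne x_2$, and so obtains $c=|\mathcal{F}|$.
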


\begin{proof}
For each $L\in\mathcal{F}$, let $t_L\ge1$ be the integer in the construction of $\mathcal{F}_0$  such that $F^{t_L}(L)\in\mathcal{F}_0$. Take $s = \max_{L\in\mathcal{F}} \{t_L\}$. Since $\mathcal{F}_0$ is forward invariant under $F$, we have $F^s(L)\in\mathcal{F}_0$ for every $L\in\mathcal{F}$.

Take a simple closed  curve $\gamma$  in the homotopy class of $C$ such that  $$\sum_{\tilde{L} \in\mathcal{F}_0}N(\gamma, \tilde{L} )=N_{\mathcal{F}_0}(C).$$ For any component $\eta$ of $F^{-s}(C)$, choose a simple closed curve $\eta'$ homotopic to $\eta$ such that $F^s(\eta') = \gamma$.   Since $F^s$ maps each $L\in\mathcal{F}$ homeomorphically onto its image in $\mathcal{F}_0$, we have
\[
 N(\eta', L) \le   N(\gamma, F^s(L)) \le \, N_{\mathcal{F}_0}(C), \:\:\forall L \in \mathcal{F}.
\]
Thus
 $$N_{\mathcal{F}}(\eta)=N_{\mathcal{F}}(\eta') \le |\mathcal{F}|\, N_{\mathcal{F}_0}(C).$$ Set $c=|\mathcal{F}|,$ then we complete the proof.
\end{proof}

In view of this estimate, we restrict our attention to the periodic family \(\mathcal{F}_0\) and its associated complexity in what follows.

To analyze the behavior of complexity under pullback, we examine the periodic structure of $\mathcal{F}_0$ and introduce the notion of adjoint arcs. More precisely, it is necessary to choose  an integer $r \ge 1$ sufficiently large so that for every $L \in \mathcal{F}_0$ with an orientation given,  there exists a separating arc $L'$ close to $L$ and  with the induced orientation,  such that    $F^{rp}$ maps $L'$ homeomorphically onto $L$ and preserves  the orientation.  Recall that $p$ is the common period of all $L \in \mathcal{F}_0$.  The construction of such $L'$ depends on whether $L$ is of type~I, type~II, or of type~III, as illustrated in Figures~\ref{F6}-~\ref{F7}.

\begin{figure}[H]
    \centering
    \includegraphics[width=0.45\linewidth]{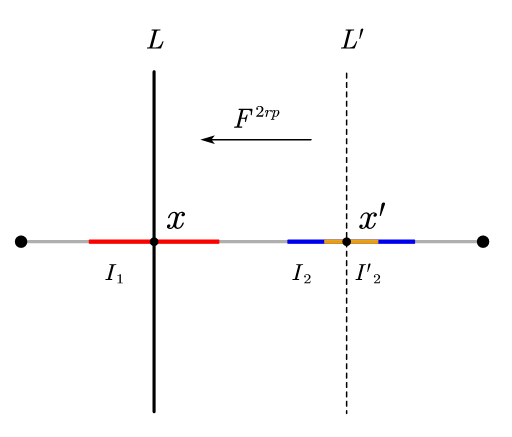}
    \caption{Constructing an adjoint arc \(L'\) for a type~I arc \(L\) - the first case.}
    \label{F6}
\end{figure}

For type~I  arcs, the construction of an adjoint arc \(L'\) splits into two cases depending on the dynamics of the corresponding periodic edge.  In the first case, suppose the periodic edge \(e\) associated with \(L\in \mathcal{F}_0\) is expanding. As constructed in Section 3.2, a type I arc \(L\)  consists of two periodic external rays \(R_1^\infty, R_2^\infty\) landing at a repelling periodic point \(x \in \operatorname{int}(e)\), and \(L\) separates \(e\).  By the choice of $x$, there is an open interval neighborhood $I$ of $x$  in $e$ such that $F^p: I \to e$ is an orientation-preserving homeomorphism. Since \(e\) is expanding, there exists an integer \(r \ge 1\) and disjoint subintervals  $I_1$ and $I_2$ of $I$  such that the restrictions
\[
F^{rp}|_{I_1} : I_1 \rightarrow e,\quad F^{rp}|_{I_2} : I_2 \rightarrow e
\]
are homeomorphisms with \(x \in \operatorname{int}(I_1)\). One can select a subinterval \( I'_{2} \subset I_2\) such that  \(F^{rp}(I'_{2}) = I_2\).  If \(F^{rp}|_{I_2}\) reverses orientation, replace \(r\) by \(2r\) and work with \(I'_{2}\) instead. Otherwise, we retain $r$ and work with $I_2$.  After this possible adjustment, we obtain an interval, either $I_2$ or $I'_{2}$, on which $F^{rp}$ is an orientation-preserving homeomorphism onto $e$.

One therefore obtains a point $x'$ in $I_2$ (or $I_2'$, depending on the choice above) such that $F^{rp}(x')=x$. In particular, $x' \ne x$.  We then lift the two external rays landing at $x$ and obtain rays $R'_1$ and $R'_2$ landing at $x'$ on the same sides of $e$ as the original rays $R_1$ and $R_2$ land at $x$.  Define \(L'\) as their union. It follows that  $$F^{rp}: L' \to L$$ is a homeomorphism with the orientation being preserved.  It is clear that the thin region  bounded by \(L\) and \(L'\) contains no post-critical point.

\begin{figure}[H]
    \centering
    \includegraphics[width=0.5\linewidth]{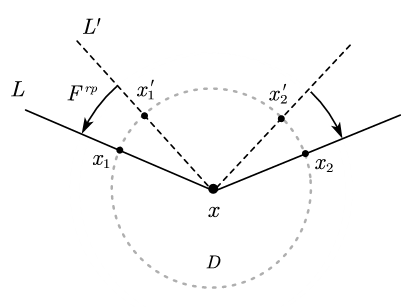}
 \caption{Constructing    an adjoint arc \(L'\) for a type~I   arc \(L\) - the second case.}
    \label{F7}
\end{figure}

In the second case, suppose the periodic edge \(e\) associated to \(L\in \mathcal{F}_0\) is  attracting. Such an arc consists of two periodic internal rays emanating from a super-attracting periodic point \(x\) and two external rays extending them.  Let \(D\) be the immediate basin of \(x\) and  \(\{x_1, x_2\} = L \cap \partial D\). Then $x_1, x_2 \notin P_F$ by the construction in Section 3.2.
 Since \(F^p(L)=L\) and the restriction \(F^p|_{\partial D}:\partial D\to\partial D\) is a covering map of degree at least \(2\),  there exists a sufficiently large integer \(r \ge 1\) and points \(x_1', x_2' \in \partial D\) on the same side of \(L\) such that
\[
F^{rp}(x_i') = x_i\quad\text{for } i=1,2.
\]  Define \(L'\) to be the union of the two internal rays from \(x\) to \(x_1'\) and \(x_2'\) together with the two external rays that continue them. By selecting \(x_1',x_2'\) sufficiently close to \(x_1,x_2\) along \(\partial D\), the two   thin regions bounded by \(L\) and \(L'\)   contain  no   post-critical point. In particular, a given
 orientation of $L$  induces an orientation of $L'$ through the deformation in the thin region bounded by $L$ and $L'$. Then
 $$
 F^{rp}:L' \to L
 $$ is an orientation-preserving homeomorphism.

Similarly, the above construction for both cases applies equally to type~II arcs.

\begin{figure}
    \centering
    \includegraphics[width=0.6\linewidth]{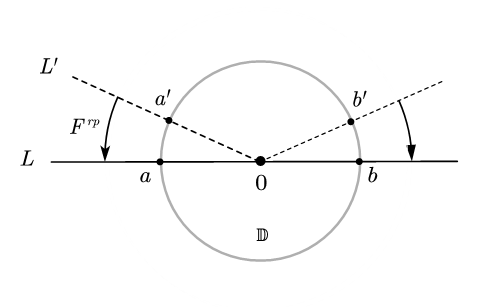}
 \caption{Construction of an adjoint arc \(L'\) for the type~III    arc \(L\).}
    \label{F7}
\end{figure}

For type III arcs that cross the gluing circle \(\mathbb{T}\), the construction is analogous to the second case for Type I (or II) arcs.  Let \(L \in
 \mathcal{F}_0\) and  \(a, b\)  be the  intersection points of \(L\) with \(\mathbb{T}\).  Since \(F^{p}|_{\mathbb{T}} : \mathbb{T} \to \mathbb{T}\) is the power map of degree \(d_0^{p} \ge 2\), we can choose an integer \(r \ge 1\) sufficiently large  and   \(a', b' \in \mathbb{T}\)
   in the same side of $L$  such that    $$F^{rp}(a') = a \hbox{,\: } F^{rp}(b') = b. $$
  Then we  take the unique lift of $L$ under \(F^{rp}\) that intersects $\Bbb T$ at $a'$ and $b'$.   Thus, \(L'\) is a separating  arc of type~III such that $F^{rp}:L' \to L
 $ is a homeomorphism which preserves the orientation. Similarly, by taking $a'$ and $b'$ sufficiently close to $a$ and $b$ respectively, we can assure that the thin regions  bounded by  \(L\) and \(L'\)  contain
   no post-critical point.

\begin{figure}[H]
    \centering    \includegraphics[width=1\linewidth]{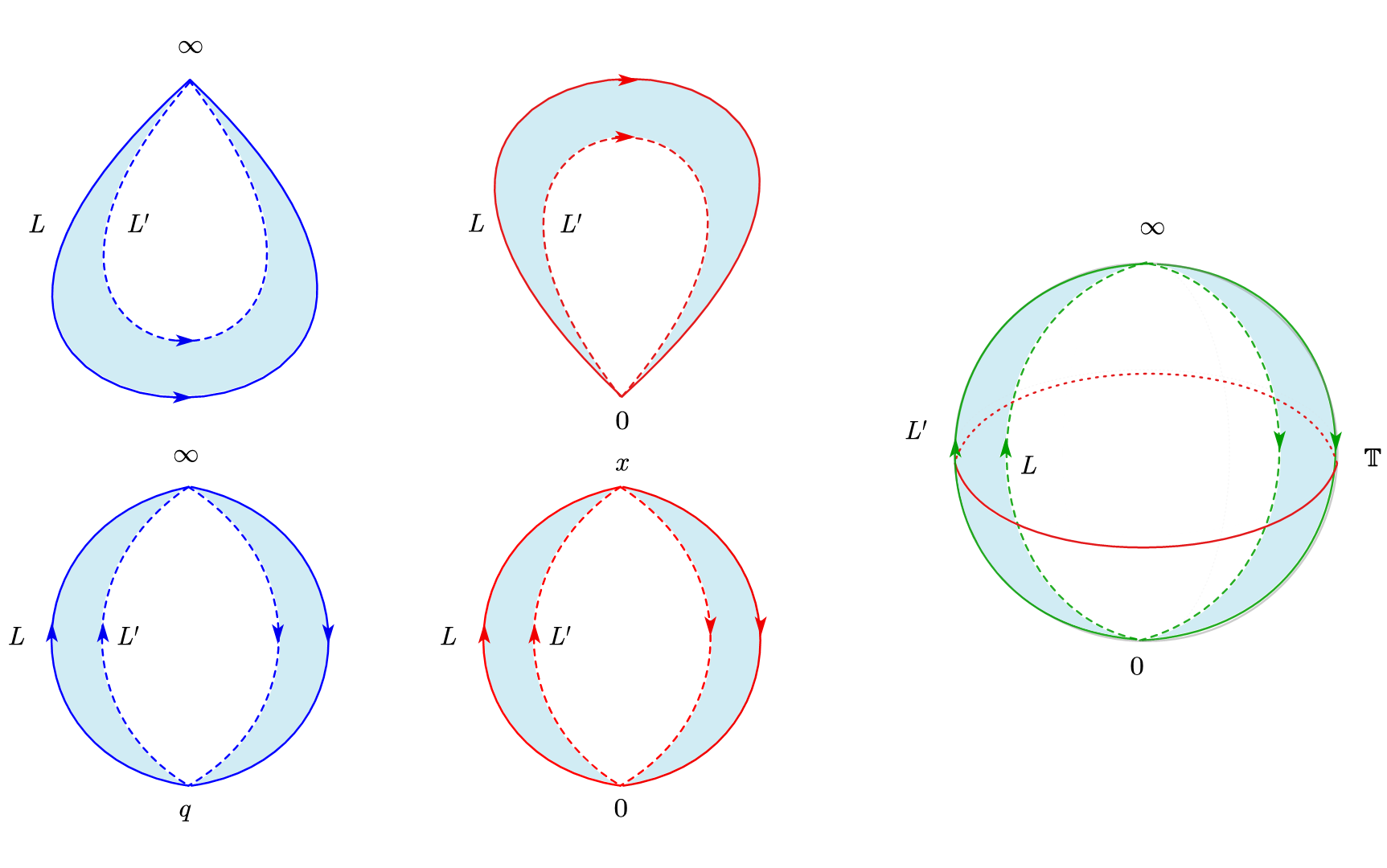}
    \caption{ The adjoints for three types of separating arcs. }
    %On the left, both arcs are of type~I and lie entirely in the basin of \(\infty\). In the center, both arcs are of type~II and are contained completely in the basin of \(0\). On the right, both arcs are of type~III and intersect the gluing curve \(\mathbb{T}\) at distinct points \(x\) and \(x'\). In all cases, the shaded region represents the strip \(S\), whose interior contains no post-critical points.}
    \label{F3}
\end{figure}

Therefore, for every \(L \in \mathcal{F}_0\) we have constructed a separating  arc  $L'$  nearby $L$ such that \(F^{rp}(L') = L\) and the restriction \(F^{rp}|_{L'}\) is orientation-preserving. Moreover, the thin region(s) bounded by \(L\) and \(L'\) contains no  post-critical point. By taking $L'$ sufficiently close to $L$, we can ensure that all the thin regions bounded by $L$ and $L'$ are disjoint with each other.     We call such \(L'\) an \emph{adjoint} of \(L\).

Clearly, by replacing $r$ with a   common multiple of the integers obtained above, we may assume that there exists an $r \ge 1$ such that for every $L \in \mathcal{F}_0$, the map $F^{rp}: L' \to L$ is an orientation-preserving  homeomorphism.

\section{Curve Complexity and Its Properties}
\label{sec:complexity}

In this section, we prove that the complexity of $\mathcal{F}_0$ does not increase under pullback, and that after a bounded number of iterates it must eventually drop.

\begin{lema}
\label{lem:mo}
For any non-peripheral curve $C$ and any component $C'$ of $F^{-1}(C)$,
\[
{N}_{\mathcal{F}_0}(C') \le {N}_{\mathcal{F}_0}(C).
\]
\end{lema}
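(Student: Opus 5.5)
The plan is to mimic the proof of Lemma~\ref{lem:eq}, now with $s=1$ and with $\mathcal{F}_0$ on both sides. Two structural facts about $\mathcal{F}_0$ make this work.

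\emph{(a) $F$ is injective on each arc.} For every $L\in\mathcal{F}_0$ we have $F^p(L)=L$, and $F^p|_L$ is a homeomorphism of $L$. Hence $F|_L:L\to F(L)$ is injective and is a homeomorphism onto its image.

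\emph{(b) $F$ permutes $\mathcal{F}_0$.} The family $\mathcal{F}_0$ is a union of forward orbits, so $F(L)\in\mathcal{F}_0$ whenever $L\in\mathcal{F}_0$. Since $F^p$ acts as the identity on the finite set $\mathcal{F}_0$, the induced map $L\mapsto F(L)$ is injective, hence a bijection of $\mathcal{F}_0$.

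Next, take a simple closed curve $\gamma$ homotopic to $C$ in $\widehat{\mathbb C}\setminus P_F$ realizing the minimum:
\[
\sum_{L\in\mathcal{F}_0}N(\gamma,L)=N_{\mathcal{F}_0}(C).
\]
We may assume $\gamma$ meets every arc transversally and avoids the finitely many points where arcs of $\mathcal{F}_0$ meet each other. The homotopy from $C$ to $\gamma$ avoids $P_F$. Since $F:\widehat{\mathbb C}\setminus F^{-1}(P_F)\to\widehat{\mathbb C}\setminus P_F$ is a covering and $P_F\subset F^{-1}(P_F)$, the homotopy lifts. This produces a component $\gamma'$ of $F^{-1}(\gamma)$ that is homotopic to $C'$ in $\widehat{\mathbb C}\setminus P_F$ and satisfies $F(\gamma')=\gamma$. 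The curve $\gamma'$ is again simple and transverse to the arcs, because $F$ is a local homeomorphism away from the critical points, and these lie in $F^{-1}(P_F)$, which $\gamma'$ avoids.

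For each $L\in\mathcal{F}_0$, the map $F$ sends $\gamma'\cap L$ into $\gamma\cap F(L)$, and by (a) it does so injectively. Therefore
\[
N(\gamma',L)\le N(\gamma,F(L)).
\]
Summing over $L\in\mathcal{F}_0$ and reindexing with the bijection in (b) gives
\[
N_{\mathcal{F}_0}(C')\le\sum_{L\in\mathcal{F}_0}N(\gamma',L)\le\sum_{L\in\mathcal{F}_0}N(\gamma,F(L))=\sum_{\tilde L\in\mathcal{F}_0}N(\gamma,\tilde L)=N_{\mathcal{F}_0}(C).
\]
If $C'$ is peripheral, the statement is vacuous or trivial, since we only measure non-peripheral curves. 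So it suffices to treat non-peripheral $C'$.

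The step needing the most care is (b). Without injectivity of $L\mapsto F(L)$, two arcs of $\mathcal{F}_0$ with the same image would double-count $N(\gamma,F(L))$ and spoil the inequality. This is exactly where periodicity of every arc in $\mathcal{F}_0$, rather than mere forward invariance, is used. A secondary point is that $N(\cdot,\cdot)$ should be read as the geometric count of intersection points of these transverse representatives, so that the injectivity of $F$ on $\gamma'\cap L$ translates directly into the counting inequality.
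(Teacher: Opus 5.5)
Your proof is correct and is essentially the paper's argument: take a minimal representative of $C$, use injectivity of $F|_L$ to get $N(\gamma',L)\le N(\gamma,F(L))$ for the corresponding preimage component, and sum using that $L\mapsto F(L)$ is a bijection of $\mathcal{F}_0$. You also spell out two steps the paper leaves implicit: bijectivity follows from $F^p=\mathrm{id}$ on $\mathcal{F}_0$, and lifting the homotopy through the covering $\widehat{\mathbb C}\setminus F^{-1}(P_F)\to\widehat{\mathbb C}\setminus P_F$ gives the right preimage component; your transversality and ``finitely many meeting points'' normalizations are unnecessary, since the count uses only injectivity of $F$ on each arc.
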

\begin{proof}
Let \(C\) be a minimal representative that achieves \(N_{\mathcal{F}_0}(C)\). For any component \(C'\) of \(F^{-1}(C)\) and any \(L\in\mathcal{F}_0\), since the restriction \(F|_L : L \to F(L)\) is a homeomorphism, we have \(N(C',L) \le N(C, F(L))\). Given that \(F : \mathcal{F}_0 \to \mathcal{F}_0\) is bijective,  it follows that
\[
N_{\mathcal{F}_0}(C')  \le \sum_{L\in\mathcal{F}_0} N(C', L)  \le \sum_{L\in\mathcal{F}_0} N(C, F(L))  = N_{\mathcal{F}_0}(C).
\]
This completes the proof.
\end{proof}

We can now establish the key decay estimate. It asserts that if the complexity of a curve exceeds a certain threshold, then after a bounded number of pullbacks,  every component must have strictly smaller complexity.

\begin{lema}
\label{prop:decay}
There exist constants $M>0$ and $m\ge 1$  depending only on $F$ such that for any non-peripheral curve $C$ with $N_{\mathcal{F}_0}(C)>M$, every  component $C'$ of $F^{-m}(C)$ satisfies  $N_{\mathcal{F}_0}(C') < N_{\mathcal{F}_0}(C)$.
\end{lema}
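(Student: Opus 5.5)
We take $m=rp$, with $r$ and $p$ as fixed at the end of Section~\ref{sec:arcs}, so that each $L\in\mathcal{F}_0$ has an adjoint $L'$ with $F^{rp}\colon L'\to L$ an orientation-preserving homeomorphism. We also have $F^{rp}(L)=L$. Let $\gamma$ be a minimal representative of $C$ realizing $N_{\mathcal{F}_0}(C)$, and let $\gamma'$ be a component of $F^{-rp}(\gamma)$.

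The plan is to show that, when $N_{\mathcal{F}_0}(C)$ is large, some $L\in\mathcal{F}_0$ satisfies the following. The intersection points $\gamma\cap L$ are partly lifted to $\gamma'\cap L$ and partly to $\gamma'\cap L'$. Moreover, the points in $\gamma'\cap L'$ can be removed by a homotopy of $\gamma'$, or transferred to $L$ without doubling.

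By Lemma~\ref{lem:mo} applied $rp$ times, we already have
\[
\sum_{L\in\mathcal{F}_0}N(\gamma',L)\le \sum_{L\in\mathcal{F}_0}N(\gamma,L).
\]
More precisely, $F^{rp}$ injects $\gamma'\cap L$ into $\gamma\cap L$, and it injects $\gamma'\cap L'$ into $\gamma\cap L$. Other lifts of $L$ also meet $\gamma'$, but those are irrelevant to the count.

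The decay mechanism comes from the thin strip $S_L$ bounded by $L$ and $L'$. This strip is a disk (or two disks in the attracting and type III cases) containing no postcritical point. Since $\gamma'$ is a simple closed curve, each component of $\gamma'\cap S_L$ is an arc. Such an arc either crosses the strip, joining $L$ to $L'$, or returns to the same side.

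First, I would show that a returning arc on the $L$ side can be homotoped off $L$, because $S_L\cap P_F=\emptyset$. Doing this strictly lowers $N(\gamma',L)$. Second, consider a crossing arc, with endpoints $u\in L$ and $u'\in L'$. I would show that $F^{rp}(u)$ and $F^{rp}(u')$ are distinct points of $\gamma\cap L$: the map $F^{rp}$ is injective on $\gamma'$ away from a boundary case handled by the degree. Then in the count
\[
N(\gamma',L)\le \#\bigl(\gamma\cap L\bigr)-\#\bigl(\gamma'\cap L'\bigr)
\]
the crossing arcs give genuine savings.

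So it suffices to show $\gamma'\cap L'\ne\emptyset$ for some $L$ whenever $N_{\mathcal{F}_0}(C)>M$. Here I would argue by contradiction. If $\gamma'$ misses every adjoint arc $L'$, then $\gamma'$ lies in the complement of $\bigcup L'$. Intersections with a fixed $L$ then occur only along the portions of $L$ outside all strips. These portions are the pieces of rays near $\infty$, near $0$, or near the landing points, on which $F^{rp}$ is expanding. Pulling back a curve of large intersection number under an expanding inverse branch shrinks its combinatorial length. Using the spherical length bound $\ell_\rho\le\delta N$ from Section~\ref{sec:arcs}, this would force a bounded intersection number, hence a contradiction once $M$ exceeds that bound.

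The main obstacle is this last step: making rigorous that avoiding all adjoints while keeping large intersection is impossible. I would try to do it combinatorially. Orient each $L$ and record the cyclic order of the points of $\gamma\cap L$. Orientation preservation of $F^{rp}\colon L'\to L$ means the lifted points on $L'$ sit in the same order as the points of $\gamma\cap L$. Then a pigeonhole argument over the finitely many complementary regions of $\bigcup_{L\in\mathcal{F}_0}(L\cup L')$ shows the following. If $N(\gamma,L)>M=M(\mathcal{F}_0)$, then two consecutive strands of $\gamma$ of the same combinatorial type must lift to strands crossing $S_L$. This yields the strict inequality $N_{\mathcal{F}_0}(C')<N_{\mathcal{F}_0}(C)$.
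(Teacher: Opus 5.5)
\textbf{There is a genuine gap.} It sits in your second step, and the rest of the plan depends on it.

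You want to show that a crossing arc with endpoints $u\in L$, $u'\in L'$ satisfies $F^{rp}(u)\neq F^{rp}(u')$, and then use $N(\gamma',L)\le \#(\gamma\cap L)-\#(\gamma'\cap L')$. That inequality amounts to saying $F^{rp}(\gamma'\cap L)$ and $F^{rp}(\gamma'\cap L')$ are disjoint subsets of $\gamma\cap L$, which you justify by injectivity of $F^{rp}$ on $\gamma'$. But $F^{rp}\colon\gamma'\to\gamma$ is a covering whose degree can be as large as $d^{rp}$. A point $c\in\gamma\cap L$ has one preimage on $L$ and one on $L'$, and both can lie on the same component $\gamma'$. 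Bounding the number of crossing arcs with equal endpoint images by the degree does not control how much these two image sets overlap. In fact, in exactly the situation the lemma must exclude (no decrease), both image sets equal all of $\gamma\cap L$.

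So the reduction to ``some $\gamma'\cap L'\neq\emptyset$'' is unfounded, and the expansion/pigeonhole argument you sketch for it is beside the point. Once returning arcs are excluded, every point of $\gamma'\cap L$ starts an arc in the strip that must exit through $L'$. Hence $\gamma'\cap L'=\emptyset$ already forces $\gamma'\cap L=\emptyset$.

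\textbf{The missing idea} is to use the degree as the obstruction itself, not as a correction term. Argue by contradiction.
\begin{enumerate}
\item Suppose $N_{\mathcal F_0}(C')=N_{\mathcal F_0}(C)$. From $N_{\mathcal F_0}(C')\le\sum_L\#(\gamma'\cap L)\le\sum_L\#(\gamma\cap L)$, the lift $\gamma'$ is itself minimal, and $\#(\gamma'\cap L)=\#(\gamma\cap L)$ for each $L$.
\item Minimality rules out returning arcs, so $\#(\gamma'\cap L)\le\#(\gamma'\cap L')$. Injectivity of $F^{rp}$ on $L'$ gives $\#(\gamma'\cap L')\le\#(\gamma\cap L)$.
\item Hence $F^{rp}$ maps both $\gamma'\cap L$ and $\gamma'\cap L'$ bijectively and order-preservingly onto $\gamma\cap L$.
\item Disjoint arcs crossing the strip join the $i$-th point $a_i$ of $\gamma'\cap L$ to the $i$-th point $b_i$ of $\gamma'\cap L'$. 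Therefore $F^{rp}(a_i)=F^{rp}(b_i)$, which is the opposite of what you set out to prove.
\item So each crossing arc maps onto all of $\gamma$. There are $N_{\mathcal F_0}(C)$ such arcs and they are pairwise disjoint, which forces $\deg(F^{rp}|_{\gamma'})\ge N_{\mathcal F_0}(C)>d^{rp}$. This is a contradiction once $M=d^{rp}$.
\end{enumerate}
Your ordering observation in the last paragraph is the right ingredient. It has to be used to force equal endpoint images and then count degree, not to produce distinct images.
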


\begin{proof}
Let $M =   d^{rp}$  and $m = rp$ with $d$ being the degree of $F$, $p$  the common period of the separating arcs in $\mathcal{F}_0$,
 and $r \ge 1$  the integer in the construction of adjoint arcs.
 Suppose  that $C$ is a minimal representative realizing $N_{\mathcal{F}_0}(C)$ such  that $N_{\mathcal{F}_0}(C) > M$ and that  $C'$ is a component of $F^{-m}(C)$. By Lemma~\ref{lem:mo}, the complexity does not increase under pullback, so we suppose for contradiction that  $N_{\mathcal{F}_0}(C') = N_{\mathcal{F}_0}(C)$. Let us derive a contradiction as follows.

Since \(F^{m}\) restricts to a homeomorphism on each \(L\in\mathcal F_0\), we have
\[
N(C,L)=\sum_{\gamma} N(\gamma,L), \qquad \forall L\in\mathcal F_0,
\]
where the sum ranges over all components \(\gamma\) of \(F^{-m}(C)\). Consequently, for any component \(C'\) of \(F^{-m}(C)\),
\[
N(C',L)\le N(C,L), \qquad \forall L\in\mathcal F_0,
\]
with equality holding if and only if the intersection number of \(L\) with every component of \(F^{-m}(C)\) other than \(C'\)  is zero.

It follows that
$$N_{\mathcal{F}_0}(C')=\sum_{L\in \mathcal{F}_0} N(C', L) \le \sum_{L\in \mathcal{F}_0}N(C, L)=N_{\mathcal{F}_0}(C).$$
By the  assumption, we must have $$N(C', L) = N(C, L), \:\:\forall L \in \mathcal{F}_0.$$
We claim that,  whenever  $C'$ enters the strip (the thin region) between  $L$ and its adjoint  arc $L'$ by crossing $L$ at a point $a$, it must eventually leave the strip by crossing $L'$ at some point $b$. Let us prove the claim by contradiction.  If not,  then since the strip contains no postcritical points, one could deform  $C'$ within the strip to reduce its intersection number with $L$.  As  the strip  contains no other arc in $\mathcal{F}_0$,   this  deformation  does not increase the intersection numbers with the  other  arcs.   This would imply that $N_{\mathcal{F}_0}(C') < N_{\mathcal{F}_0}(C)$, which contradicts the assumption that $N_{\mathcal{F}_0}(C') = N_{\mathcal{F}_0}(C)$. This proves the claim.

By the claim, it follows that
\[
N(C',L) \le N(C',L'),  \:\:\forall L \in \mathcal{F}_0.
\]
Since $F^m:L'\to L$ is a homeomorphism, we  have $$N(C',L')\le N(C,L),  \:\:\forall L \in \mathcal{F}_0.$$
Therefore,
\[
N(C',L') \le  N(C, L) = N(C', L) \le N(C', L'),  \:\:\forall L \in \mathcal{F}_0.
\]
Thus $$N(C',L') = N(C',L),  \:\:\forall L \in \mathcal{F}_0.$$

\begin{figure}
    \centering
    \includegraphics[width=0.65\linewidth]{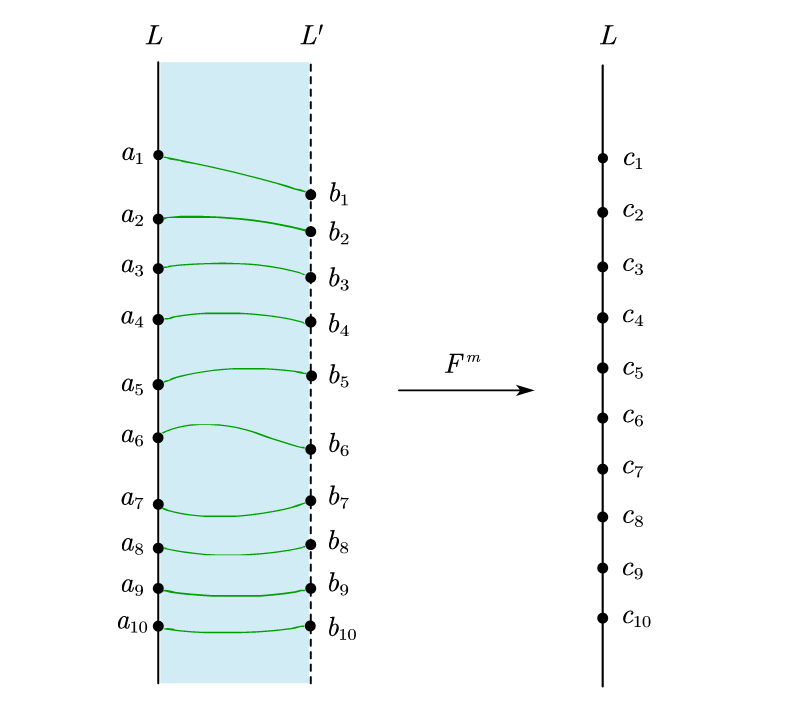}
 \caption{The points in the sets $C'\cap L$ and $C'\cap L'$ are mapped, in order, to the points in $C\cap L$.} 
    \label{F11}
\end{figure}

 Note that the points in $C' \cap L$, $C' \cap L'$ and $C \cap L$ are naturally ordered by the orientation of $L$ and $L'$, respectively. See Figure 11 for an illustration. For any two adjacent intersection points $a \in C' \cap L$ and $b \in C' \cap L'$ that are connected by an arc of $C'$ lying inside the strip,  the equality $N(C',L') = N(C',L)$ implies that  $a$ and $b$ occur in  the same order in $C' \cap L$ and $C' \cap L'$, respectively.    That is,  $a_i$ and $b_i$ are mapped by $F^m$ to $c_i$, with $$1 \le i \le    N(C',L') = N(C',L) = N(C, L).$$     So the arc of $C'$, joining $a_i$ and $b_i$ and lying in the strip bounded by $L$ and $L'$,  is mapped by $F^m$ onto the entire curve $C$.  Since for each $L \in \mathcal{F}_0$, there are exactly $N(C',L)$ such disjoint arcs, and since all such strips are disjoint with each other,
 the degree of  $F^m: C' \to C$  is  at least $$\sum_{L \in \mathcal{F}_0} N(C', L) = \sum_{L \in \mathcal{F}_0} N(C,  L)= \mathcal{N}_{\mathcal{F}_0}(C) >M.$$

This contradicts the fact that the topological degree of $F^m$ is $d^{rp}$. Hence the lemma holds.
\end{proof}

 By Lemmas~\ref{lem:mo} and ~\ref{prop:decay}, we have

\begin{cor}
\label{cor:bounded}
Let  $M = d^{rp}$.  Then for every  non-peripheral curve \(C \subset \widehat{\mathbb{C}} \setminus P_F\), there exists an integer $K \ge 1$, such that for all $k \ge K$,   every non-peripheral component \(C'\) of \(F^{-k}(C)\) satisfies $$N_{\mathcal{F}_0}(C') \le M.$$
\end{cor}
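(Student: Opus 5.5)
The plan is a descent argument that combines the monotonicity of Lemma~\ref{lem:mo} with the decay estimate of Lemma~\ref{prop:decay}. Fix a non-peripheral curve $C$ and set $n_0 = N_{\mathcal{F}_0}(C)$, a nonnegative integer. For $k\ge 0$, let
\[
a_k = \max\{ N_{\mathcal{F}_0}(C') : C' \text{ a non-peripheral component of } F^{-k}(C)\},
\]
with $a_k = 0$ if there is no non-peripheral component. Since $F^{-k}(C)$ has finitely many components, $a_k$ is well defined and $a_0 = n_0$. We only track non-peripheral components. A non-peripheral component of $F^{-(k+1)}(C)$ is a component of $F^{-1}(D)$ for some component $D$ of $F^{-k}(C)$, and such a $D$ must itself be non-peripheral. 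Otherwise $D$ would bound a disk containing at most one point of $P_F$; the corresponding component of the preimage disk would contain at most one postcritical point as well (the standard fact that preimages of peripheral curves are peripheral, using $F^{-1}(P_F)\supset P_F$); and then every component of $F^{-1}(D)$ would be peripheral. Lemma~\ref{lem:mo} therefore gives $a_{k+1}\le a_k$. More generally, complexities of non-peripheral preimages never exceed those at earlier levels.

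Next I would show that the sequence keeps dropping until it reaches $M$. Suppose $a_k > M$, and let $C'$ be a non-peripheral component of $F^{-(k+m)}(C)$. Then $C'$ is a component of $F^{-m}(D)$ for some non-peripheral component $D$ of $F^{-k}(C)$. There are two cases. If $N_{\mathcal{F}_0}(D) > M$, Lemma~\ref{prop:decay} gives $N_{\mathcal{F}_0}(C') < N_{\mathcal{F}_0}(D)\le a_k$. If $N_{\mathcal{F}_0}(D)\le M$, monotonicity gives $N_{\mathcal{F}_0}(C')\le M < a_k$. In both cases $N_{\mathcal{F}_0}(C') \le a_k - 1$, since complexities are integers. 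Hence $a_{k+m}\le a_k - 1$ whenever $a_k > M$.

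Iterating this bound starting from $a_0 = n_0$ shows that $a_{jm}\le \max(M, n_0 - j)$. Taking $K = m\,\max(1, n_0 - M)$ gives $a_K \le M$, and monotonicity then gives $a_k\le M$ for all $k\ge K$, which is the claim. The step needing the most care is the bookkeeping that passes the case analysis through components rather than through $C$ itself. Lemma~\ref{prop:decay} only applies to curves whose complexity exceeds $M$, so components already at or below $M$ must be handled by monotonicity instead. One also has to check that peripheral intermediate components never produce non-peripheral descendants; this is the peripheral-preimage fact used above.
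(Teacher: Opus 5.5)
Your proposal is correct and takes essentially the same route as the paper, which obtains this corollary directly by combining the monotonicity lemma (Lemma~\ref{lem:mo}) with the decay lemma (Lemma~\ref{prop:decay}). Your descent on $a_k$ spells out the bookkeeping the paper leaves implicit: integrality of $N_{\mathcal{F}_0}$, the case split at $M$, and the check that peripheral curves have only peripheral preimages.
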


\begin{proof}[Proof of the Main Theorem]
For any non-peripheral curve \(C \subset \widehat{\mathbb{C}}\setminus P_F\), by Corollary~1, there exists a constant \(K \ge 0\) such that for every \(k \ge K\),  every non-peripheral component \(C'\) of \(F^{-k}(C)\) satisfies
\[
N_{\mathcal{F}_0}(C')\le M.
\]
 Let $s$ be the integer of Lemma~3.1. Then for any $k\ge K+s$ and any non-peripheral component $\eta$ of $F^{-k}(C)$,
\[
N_{\mathcal{F}}(\eta)\le c\,N_{\mathcal{F}_0}(F^{s}(\eta))\le cM.
\]
According to Section 3.1,   the curves with bounded complexity with respect to $\mathcal{F}$ must belong to a finite set of homotopy classes. Thus for all sufficiently large $k$, every non-peripheral component of $F^{-k}(C)$ lies in a fixed finite set of homotopy classes. The main theorem follows.
\end{proof}

The preceding considerations yield the following  contraction estimate.

\begin{cor}
There exist constants \(\alpha > 1\) and \(\beta, D > 0\), depending only on \(F\) and \(\mathcal{F}\), such that for every simple closed curve \(C \subset \widehat{\mathbb C} \setminus P_F\) and every \(n \ge 1\), the following estimate holds:
\[
\mathcal{N}_{\mathcal{F}}(\eta) \le
\begin{cases}
\alpha \mathcal{N}_{\mathcal{F}}(C) - \beta n + D, & \text{if } 1 \le n < \dfrac{\alpha}{\beta} \mathcal{N}_{\mathcal{F}}(C), \\[8pt]
D, & \text{if } n \ge \dfrac{\alpha}{\beta} \mathcal{N}_{\mathcal{F}}(C),
\end{cases}
\]
where \(\eta\) is any component of \(F^{-n}(C)\).
\end{cor}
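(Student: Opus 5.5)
The plan is to convert the qualitative decay in Lemma~\ref{prop:decay} into a linear rate, and to move between the two complexities $\mathcal{N}_{\mathcal{F}}$ and $\mathcal{N}_{\mathcal{F}_0}$ using Lemma~\ref{lem:eq} and a reverse comparison.

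\textbf{Step 1: compare the two complexities.} First I establish the reverse inequality $N_{\mathcal{F}_0}(C)\le c'\,N_{\mathcal{F}}(C)$ for a constant $c'$ depending only on $\mathcal{F}$ and $\mathcal{F}_0$. Each arc $\tilde L\in\mathcal{F}_0$ is a fixed piecewise smooth Jordan curve. Put $\tilde L$ in general position with respect to the graph $\bigcup_{L\in\mathcal{F}}L$, with vertices moved off $\tilde L$. Then $\tilde L$ meets each complementary region $U$ of $\bigcup_{L\in\mathcal{F}} L$ in finitely many subarcs, and their number is bounded independently of $C$. Each such $U$ is a disk containing at most one point of $P_F$. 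Take a minimal representative $\gamma$ for $N_{\mathcal{F}}$. Inside each $U$, every subarc of $\gamma$ can be homotoped rel endpoints so that it meets each subarc of $\tilde L$ in $U$ at most a bounded number of times. This is possible because a disk with at most one puncture admits essentially only one isotopy class of arc between two boundary points on each side of the puncture. This gives $N(\gamma,\tilde L)\le K_0\,N_{\mathcal{F}}(C)$, and summing over $\tilde L\in\mathcal{F}_0$ gives $c'$.

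\textbf{Step 2: linear decay for $N_{\mathcal{F}_0}$.} Write $x_j=\max_\eta N_{\mathcal{F}_0}(\eta)$, where $\eta$ ranges over the components of $F^{-jm}(C)$. By Lemma~\ref{lem:mo}, $N_{\mathcal{F}_0}$ is non-increasing along the whole sequence of pullbacks. By Lemma~\ref{prop:decay}, and since complexities are integers, $x_{j+1}\le x_j-1$ whenever $x_j>M$. Hence $x_j\le\max(x_0-j,\,M)$, and for every $n\ge 0$ and every component $\xi$ of $F^{-n}(C)$,
\[
N_{\mathcal{F}_0}(\xi)\le \max\bigl(N_{\mathcal{F}_0}(C)-\lfloor n/m\rfloor,\,M\bigr).
\]

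\textbf{Step 3: transfer back to $\mathcal{F}$.} For $n\ge s$, let $\eta$ be a component of $F^{-n}(C)$. Then $F^s(\eta)$ is a component of $F^{-(n-s)}(C)$, so Lemma~\ref{lem:eq} together with Step 2 and Step 1 gives
\[
N_{\mathcal{F}}(\eta)\le c\max\bigl(c'N_{\mathcal{F}}(C)-(n-s)/m+1,\;M\bigr).
\]
This rearranges to the stated form with $\alpha=cc'$ (enlarged so that $\alpha>1$), $\beta=c/m$, and $D$ absorbing $cM$, $c(s/m+1)$ and the small cases. For $n<s$, I bound $N_{\mathcal{F}}(\eta)$ by $c\,N_{\mathcal{F}_0}$ of a lift of bounded depth; equivalently, I use that the arcs of $F^{-n}(\mathcal{F})$ for $n<s$ are a finite collection, which yields a multiplicative bound that is absorbed into $\alpha$ and $D$. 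Peripheral and inessential components, and the case where $C$ itself is peripheral, are handled by the convention $N=0$, or are absorbed into $D$.

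The two regimes of the corollary then follow by a case split at $n=\frac{\alpha}{\beta}N_{\mathcal{F}}(C)$. Past this threshold the linear term is non-positive, so only $D$ remains; before it, $\alpha N_{\mathcal{F}}(C)-\beta n+D$ dominates. The main obstacle I expect is Step 1, the reverse comparison $N_{\mathcal{F}_0}\lesssim N_{\mathcal{F}}$: minimizing with respect to $\mathcal{F}$ does not automatically control intersections with $\mathcal{F}_0$. The first thing I would try is the local bigon-removal argument inside the once-punctured disks described in Step 1.
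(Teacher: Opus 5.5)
Your proposal is correct. Steps~2--3 are exactly the derivation the paper has in mind. The paper gives no written proof, only the remark that the estimate follows from the preceding considerations, that is, from Lemma~\ref{lem:eq} combined with the integer-valued decay of Lemmas~\ref{lem:mo} and~\ref{prop:decay}.

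Your Step~1 is a real addition, and it is needed. Every result in the paper passes from $\mathcal{F}_0$-complexity to $\mathcal{F}$-complexity, never back. Without a bound $N_{\mathcal{F}_0}(C)\le c'N_{\mathcal{F}}(C)$, the quantity $N_{\mathcal{F}}(C)$ cannot appear on the right-hand side at all. Your filling argument supplies this bound:
\begin{itemize}
\item every complementary region of $\bigcup_{L\in\mathcal{F}}L$ is a disk with at most one point of $P_F$;
\item each arc of $\mathcal{F}_0$ is a finite union of rays, so it meets each region in finitely many arcs;
\item each of the at most $N_{\mathcal{F}}(C)$ subarcs of a minimal $\gamma$ can therefore be replaced rel endpoints by one meeting $\bigcup\mathcal{F}_0$ a bounded number of times.
\end{itemize}

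A few details should be tidied.
\begin{itemize}
\item The shared vertices $\infty$, $0$ and the super-attracting centres lie in $P_F$, so they cannot be moved off $\tilde L$. Instead, keep $\gamma$ outside small disks around these punctures.
\item A single representative must serve all $\tilde L\in\mathcal{F}_0$ at once, and arcs of $\mathcal{F}_0$ may share rays. It is therefore cleaner to replace the subarcs of $\gamma$ in each region by fixed template arcs than to remove bigons one arc at a time. The templates are determined by the boundary intervals containing the endpoints and by the side of the puncture, and are taken as parallel copies so that $\gamma$ stays simple.
\item For $0<n<s$, Lemma~\ref{lem:eq} does not apply, since $F^{s-n}(C)$ need not be a simple closed curve. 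Use instead that $F^n|_L$ is injective for every $L\in\mathcal{F}$, which gives $N_{\mathcal{F}}(\eta)\le\min_{\gamma\sim C}\sum_{L\in\mathcal{F}}N(\gamma,F^n(L))$. Applying Step~1 to the finitely many image families $F^n(\mathcal{F})$ then gives a multiplicative constant that you absorb into $\alpha$ and $D$, as you intended.
\end{itemize}
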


\end{document}